\documentclass[12pt]{article}
\usepackage{amssymb}
\usepackage{amsmath}
\usepackage[english]{babel}
\usepackage{amsthm}
\usepackage{graphicx}
\usepackage{caption}
\usepackage{subcaption}
\usepackage[utf8]{inputenc}
\usepackage{breqn}
\usepackage{amsfonts}
\usepackage[capposition=bottom]{floatrow}
\usepackage{epsfig}
\usepackage{xcolor}
\usepackage{cite}
\usepackage{breqn}
\usepackage{authblk}
\usepackage[utf8]{inputenc}
\usepackage{enumerate}
\usepackage{enumitem} 
\usepackage{float}
\usepackage{hyperref}

\def\bc{\begin{center}}
\def\ec{\end{center}}

\def\s2c{\vskip 2cm}
\def\bt{\begin{Theorem}}
\def\et{\end{Theorem}}
\def\bd{\begin{Definition}}
\def\ed{\end{Definition}}
\def\bl{\begin{Lemma}}
\def\el{\end{Lemma}}
\def\be{\begin{Example}}
\def\ee{\end{Example}}
\def\bcor{\begin{Corollary}}
\def\ecor{\end{Corollary}}
\def\br{\begin{Remark}}
\def\er{\end{Remark}}

\def\mysection{\setcounter{equation}{0}\section}
\newtheorem{theorem}{Theorem}
\newtheorem{Lemma}{Lemma}[section]
\newtheorem{Theorem}[Lemma]{Theorem}
\newtheorem{Definition}[Lemma]{Definition}

\newtheorem{Corollary}[Lemma]{Corollary}
\newtheorem{Remark}[Lemma]{Remark}
\newtheorem{example}[Lemma]{Example}
\allowdisplaybreaks
\usepackage{titlesec}
\usepackage{ulem} 
\titleformat{\section}
{\normalfont\Large\bfseries}{\thesection.}{0.5em}{}
\date{}
\title{\textbf{Bilinear Calder\'{o}n-Zygmund operators on Vilenkin groups}}
\author{
Adil Shafi Wani$^{1}$,
Qaiser Jahan$^{1}$\thanks{Corresponding author: \texttt{qaiser@iitmandi.ac.in}}, 
 Salman Ashraf$^{2}$
}
\affil{$^{1}$School of Mathematical and Statistical Sciences,
Indian Institute of Technology Mandi, Kamand (H.P.) 175005, India}

\affil{$^{2}$Department of Mathematics, Applied Science Cluster,
University of Petroleum and Energy Studies (UPES),
Dehradun, Uttarakhand 248007, India}

\begin{document}
\maketitle

\begin{abstract}
In this article, we study bilinear Calder\'on--Zygmund operators on a Vilenkin group $G$. As a preliminary step, we establish a Grafakos--Torres-type endpoint weak-type result in our setting. Furthermore, we prove that such operators extend to bounded bilinear mappings from $L^{p_1}(G)\times L^{p_2}(G)$ into $L^p(G)$ under the natural condition
$\frac{1}{p}=\frac{1}{p_1}+\frac{1}{p_2}.$
We then obtain a corresponding boundedness result in Morrey spaces, showing that these operators extend to bounded bilinear mappings from $\mathcal{M}_{p_1,u_1}(G)\times \mathcal{M}_{p_2,u_2}(G)$ into $\mathcal{M}_{p,u}(G)$ under suitable assumptions. These results generalize the classical bilinear estimates to the setting of Vilenkin groups.
\end{abstract}

\vskip .5cm \noindent {\bf Keywords}: Vilenkin groups, Ultrametric inequality, Morrey spaces, Bilinear Calder\'{o}n-Zygmund operators.
\vskip .5cm \noindent {\bf AMS Subject Classification}: 42B20, 43A15, 43A70, 47A07

\mysection{Introduction}
The theory of bilinear singular integral operators has become a central topic in modern harmonic analysis since the pioneering work of Coifman and Meyer~ \cite{21} and the subsequent development of multilinear Calderón–Zygmund operators by several authors, including Christ and Journé\cite{22}, Kenig and Stein \cite{23}, and later Grafakos and Torres \cite{24}. Bilinear singular integrals play an important role in several areas of analysis, including product estimates, nonlinear partial differential equations, and time-frequency analysis. In the Euclidean setting, a systematic theory of bilinear Calder\'on--Zygmund operators includes strong-type boundedness on products of Lebesgue spaces together with the endpoint weak-type implication. Further developments on multilinear Calderón–Zygmund operators and their boundedness properties on various function spaces can be found in  \cite{24,25,26,27,29,30} and the references therein.

The aim of the present article is to develop an analogue of this theory on a Vilenkin group $G$. Vilenkin groups constitute a class of zero-dimensional locally compact Abelian groups that generalize $p-$adic groups and the additive groups of local fields. These groups are totally disconnected and non-Archimedean, and provide a geometry that is different from the Euclidean case, characterized by a topology generated by a nested sequence of compact open subgroups, and they also serve as useful models for ultrametric structures arising in non-Archimedean analysis and related areas of mathematical physics. Extensive research has been conducted on various function spaces within this framework, such as weighted Hardy spaces, weighted Lebesgue spaces, Herz spaces, Herz–Hardy spaces, and weighted Triebel–Lizorkin spaces\cite{11,13,14,15,16,17,18,19}.\\

Our principal objective is to establish boundedness properties of bilinear Calder\'on--Zygmund operators on Vilenkin groups, both on products of Lebesgue spaces and in the Morrey-space setting. A key technical ingredient in our approach is an endpoint weak-type implication of Grafakos--Torres type. More precisely, we first show that if a bilinear Calder\'on--Zygmund operator $T$ satisfies
\[
T:L^p(G)\times L^q(G)\to L^{r,\infty}(G),
\qquad
\frac1p+\frac1q=\frac1r,
\]
for some $1\le p,q\le \infty$ and $0<r<\infty$, then
\[
T:L^1(G)\times L^1(G)\to L^{1/2,\infty}(G).
\]
This is the Vilenkin-group analogue of the corresponding endpoint implication established by Grafakos and Torres in the Euclidean setting, and it serves as a crucial ingredient in the proof of our main lemma. To illustrate the behavior of the endpoint that underlies our arguments, we also provide an example.

Building on this endpoint estimate, we prove that bilinear Calder\'on--Zygmund operators extend to bounded bilinear mappings
\[
L^{p_1}(G)\times L^{p_2}(G)\to L^p(G),
\qquad
1<p_1,p_2<\infty,\quad \frac1p=\frac1{p_1}+\frac1{p_2}.
\]
This yields the bilinear Calder\'on--Zygmund boundedness theorem on Vilenkin groups and extends the classical strong-type theory to a natural ultrametric setting.

We then establish a corresponding boundedness result in Morrey spaces. The Morrey spaces were introduced by Charles Morrey in\cite{3} in connection with regularity problems arising in the calculus of variations and quasi-linear elliptic partial differential equations. Compared with classical Lebesgue spaces, Morrey spaces capture local integrability properties more precisely and thus provide a refined description of local regularity. Owing to these features, they have found wide applications in partial differential equations and harmonic analysis\cite{3}. Recent studies have extended the analysis of Riesz potentials and singular integral operators to Morrey spaces over $p-$adic fields\cite{5,6}.\\

In the bilinear setting Calder\'on--Zygmund operators have been studied on generalized weighted Morrey spaces over RD-spaces\cite{34}, generalized fractional mixed Morrey spaces\cite{35} and other related spaces \cite{26,36,37}. In the present setting, we prove the boundedness of bilinear Calder\'on--Zygmund operators on the product of Morrey spaces on Vilenkin groups.\\

Under suitable structural assumptions on the underlying measure and appropriate growth and summability conditions, we show that bilinear Calder\'on--Zygmund operators extend to bounded bilinear mappings
\[
\mathcal{M}_{p_1,u_1}(G)\times \mathcal{M}_{p_2,u_2}(G)\to \mathcal{M}_{p,u}(G).
\]

The paper is organized as follows. In Section~2, we recall the necessary preliminaries on Vilenkin groups, Morrey spaces, and bilinear Calder\'on--Zygmund kernels. Section~3 is devoted to the endpoint weak-type implication and example. Section~4 contains the proof of the boundedness theorem on products of Lebesgue spaces. In the last section, we establish the corresponding Morrey space boundedness result.

\mysection{Preliminaries}
A Vilenkin group $G$ is a locally compact abelian group characterized by the existence of a strictly descending sequence of compact open subgroups $\{G_n\}_{n \in \mathbb{Z}}$, satisfying
\begin{equation}
\bigcup_{n=-\infty}^{\infty} G_n = G,
\qquad
\bigcap_{n=-\infty}^{\infty} G_n = \{0\}.
\label{eq:1}
\end{equation}
\setcounter{equation}{1}
\begin{equation}
\sup\{\operatorname{order}(G_n / G_{n+1}) : n \in \mathbb{Z}\} = B < \infty .
\label{eq:2}
\end{equation}
\setcounter{equation}{2}
Before proceeding further, we present a concrete example of a Vilenkin group.\\
Let
\[
D := \Big\{ x = (x_i)_{i\in\mathbb{Z}} : x_i \in \{0,1\} \text{ for all } i\in\mathbb{Z},
\text{ and } x_i = 0 \text{ for all } i < N_x \Big\},
\]
where \(N_x \in \mathbb{Z}\) depends on \(x\). Addition on \(D\) is defined coordinate wise modulo \(2\). A metric \(d\) on \(D\) is introduced by $d(x,y) = 2^{-n},~ \text{if } (x-y)_n = 1 \text{ and } (x-y)_i = 0 \text{ for all } i < n.$ With this metric, \(D\) becomes a locally compact abelian topological group.

For each \(n \in \mathbb{Z}\), define the subgroup
\[
D_n := \{ x \in D : x_i = 0 \text{ for all } i < n \}.
\]
The family \((D_n)_{n\in\mathbb{Z}}\) forms a decreasing sequence of compact open subgroups of \(D\)
that generates the topology of the group.

The dual group \(\Gamma_D\) can be described explicitly.
For any \(y = (y_i)_{i\in\mathbb{Z}} \in D\), define the character
\[
\gamma_y(x) := \exp\!\left( \pi i \sum_{k=-\infty}^{\infty} x_k y_{-k} \right),
\quad x \in D.
\]
Then every continuous character of \(D\) is of this form, and hence
\[
\Gamma_D = \{ \gamma_y : y \in D \}.
\]

This group serves as a fundamental example of a Vilenkin group.
Other important examples include the additive groups of \(p\)-adic fields $\mathbb{Q}_p$, where $p$ is a prime number and, more generally, non-Archimedean local fields.\\

A local field can be described as a valued field where the valuation is governed by the ultrametric inequality (also known as the non-Archimedean property):$$|x + y| \leq \max(|x|, |y|).$$
Detailed discussions regarding the properties and theory of local fields are available in the works of \cite{1,38}. The reader is referred to \cite{2} for more examples of locally compact Vilenkin groups.

Let $dx$ represent the Haar measure on $G$, normalized so that $\int_{G_0} dx = 1.$ The Haar measure of any measurable set $A \subset G$ is denoted  by $|A|$. Let $|G_n| = m_n^{-1}, n \in \mathbb{Z}.$
Then we have
\begin{equation}
2 m_n \le m_{n+1} \le B m_n, \qquad \forall n \in \mathbb{Z}.\label{eq:3}
\end{equation}
\setcounter{equation}{3}
Therefore, for any $k \in \mathbb{Z}$ and $\alpha > 0$, we obtain
\begin{equation}
\sum_{n=k}^{\infty} \frac{1}{m_n^{\alpha}} \le C \frac{1}{m_k^{\alpha}},
\qquad
\sum_{n=-\infty}^{k} m_n^{\alpha} \le C m_k^{\alpha},
\label{eq:4}
\end{equation}
\setcounter{equation}{4}
for some constant $C>0$.

We denote the collection of all measurable functions defined on $G$ as $\mathcal{M}(G)$. Furthermore, we let $\mathcal{I}$ represent the collection of cosets in $G$, defined as $\mathcal{I} = \{x + G_k : x \in G, k \in \mathbb{Z}\}$.

For any $x$, $y$ in $G$, we define the function $d: G \times G \to [0, \infty)$ by $$d(x, y) = \begin{cases} m_n^{-1}, & \text{if } x - y \in G_n \setminus G_{n+1} \\ 0, & \text{if } x = y .\end{cases}$$  This function $d$ is established as a metric that induces a topology identical to the original topology of $G$. We also write $|x| = d(x,0).$

A function $\varphi : G \to \mathbb{C}$ belongs to $\mathcal{S}(G)$ if there exist $k,l \in \mathbb{Z}$
such that $\operatorname{supp}\varphi \subset G_k$ and $\varphi$ is constant on the cosets of the
subgroup $G_l$ of $G$.

We say that a sequence $\{\varphi_n\}$ converges to $\varphi$ in $\mathcal{S}(G)$ if there exist
$k,l \in \mathbb{Z}$ such that
\[
\operatorname{supp}\varphi_n \subset G_k, \quad n \in \mathbb{Z}, \qquad
\operatorname{supp}\varphi \subset G_k,
\]
and $\varphi_n$ and $\varphi$ are constant on the cosets of $G_l$ in $G$, and
\[
\lim_{n \to \infty} \varphi_n = \varphi \quad \text{uniformly on } G.
\]

Let $\mathcal{S}'(G)$ denote the space of all continuous linear functionals on $\mathcal{S}(G)$.
We say that a sequence $\{f_n\} \subset \mathcal{S}'(G)$ converges to $f \in \mathcal{S}'(G)$ if,
for any $\varphi \in \mathcal{S}(G)$,
\[
\lim_{n \to \infty} f_n(\varphi) = f(\varphi).
\]
\begin{Definition} \label{Definition 2.1}
    Let \(G\) be a Vilenkin group equipped with  Haar measure \(dx\).
For \(1 \leq p < \infty\), the space \(L^{p}(G)\) consists of all measurable
functions \(f : G \to \mathbb{R}\) such that
\[
\|f\|_{L^{p}(G)}
= \left( \int_{G} |f(x)|^{p} \, dx \right)^{1/p} < \infty.
\]
For \(p = \infty\), the space \(L^{\infty}(G)\) is defined as the collection
of all essentially bounded measurable functions on \(G\), endowed with the norm
\[
\|f\|_{L^{\infty}(G)} = \operatorname*{ess\,sup}_{x \in G} |f(x)|.
\]
\end{Definition}
   We now recall the definition of Morrey spaces on Vilenkin groups from \cite{Ho}.
\begin{Definition} \label{Definition 2.2}
Let \(G\) be a Vilenkin group, let \(0 < p < \infty\), and let
\(u : \mathcal{I} \to (0,\infty)\) be a positive function defined on the family
\(\mathcal{I}\) of cosets of \(G\).
The Morrey space $\mathcal{M}_{p,u}(G)$ is defined as the collection of all measurable
functions \(f \in \mathcal{M}(G)\) such that
\begin{equation}
    \|f\|_{\mathcal{M}_{p,u}(G)}
:= \sup_{I \in \mathcal{I}} \frac{1}{u(I)} \,
\big\| \chi_{I} f \big\|_{L^{p}(G)} < \infty,
\label{eq:def}
\end{equation}
where \(\chi_{I}\) denotes the characteristic function of the set \(I\).
\end{Definition}
When \(0 < p \leq q < \infty\) and the function \(u\) is chosen as $u(I) = |I|^{\frac{1}{p}-\frac{1}{q}},$ the space $\mathcal{M}_{p,u}(G)$, denoted by $\mathcal{M}_{p,q}(G)$ coincides with the natural counterpart of the classical Morrey space on \(\mathbb{R}^{n}\) originally
introduced by Morrey in \cite{3}. More generally, the spaces $\mathcal{M}_{p,u}(G)$ provide analogues of the generalized Morrey spaces on \(\mathbb{R}^{n}\) studied by Nakai in \cite{4}.
In particular, these spaces include the generalized Morrey spaces on the \(p\)-adic
field \(\mathbb{Q}_p\)\cite{5,6}.
Since every non-Archimedean local field \(K\) is a Vilenkin group, the above
definition also yields the Morrey space on non-Archimedean local field $\mathcal{M}_{p,u}(K)$.
\begin{Definition} \label{Definition 2.3}
    For any locally integrable function $f$, the Hardy--Littlewood maximal operator on $G$ is defined by
\[
Mf(x)
:= \sup_{I\ni x}\frac{1}{|I|} \int_I |f(y)| \, dy,
\]
where the supremum is taken over all $I \in \mathcal{I}$ containing $x$.\\
The sharp maximal operator is defined by
\[
M^{\#}f(x)
:= \sup_{I \ni x} \frac{1}{|I|}
\int_I \bigl| f(y) - f_I \bigr| \, dy,
\]
where the supremum is taken over all $I \in \mathcal{I}$ containing $x$, and
\[
f_I := \frac{1}{|I|} \int_I f(y)\, dy
\]
denotes the average of $f$ over $I$.

For $\delta>0$, we define the $\delta$-sharp maximal operator by
\[
M^{\#}_\delta f := \bigl(M^{\#}(|f|^\delta)\bigr)^{1/\delta}.
\]
Similarly, for $\delta>0$, we define
\[
M_\delta f
:= \bigl(M(|f|^\delta)\bigr)^{1/\delta}.
\]
\end{Definition}
According to the first lemma in Section~3 of \cite{7}, the operator $M$ is bounded on $L^p(G)$ for all $p \in (1,\infty)$ and is of weak type $(1,1)$. \\

The notion of multilinear Calderón–Zygmund operators was introduced and systematically developed by Grafakos and Torres in \cite{24}. We recall that definition and adapt it to the framework of Vilenkin groups.
\begin{Definition} \label{Definition 2.4}
Let $K(x,y_1,y_2)$ be a locally integrable function defined away from the diagonal $x=y_1=y_2$ in $G\times G\times G$, which satisfies the size estimate
\begin{equation}\label{eq:size}
|K(x,y_1,y_2)| \le \frac{C}{\big(d(x,y_1)+d(x,y_2)\big)^{2}}
\end{equation}
for some $C>0$ and all $(x,y_1,y_2)\in G\times G\times G$ with $x\neq y_j$ for some $j$. Furthermore, assume that for some $\varepsilon>0$ we have the smoothness estimates
\begin{equation}\label{eq:smooth1}
|K(x,y_1,y_2)-K(x',y_1,y_2)| \le \frac{Cd(x,x')^{\varepsilon}}{\big(d(x,y_1)+d(x,y_2)\big)^{2+\varepsilon}}
\end{equation}
whenever $d(x,x') \le \tfrac{1}{2}\max\{d(x,y_1),d(x,y_2)\}$, and also that
\begin{equation}\label{eq:smooth2}
|K(x,y_1,y_2)-K(x,y_1',y_2)| \le \frac{Cd(y_1,y_1')^{\varepsilon}}{\big(d(x,y_1)+d(x,y_2)\big)^{2+\varepsilon}}
\end{equation}
whenever $d(y_1,y_1') \le \tfrac{1}{2}\max\{d(x,y_1),d(x,y_2)\}$, as well as a similar estimate with the roles of $y_1$ and $y_2$ reversed. Kernels satisfying these conditions are called bilinear Calder\'on--Zygmund kernels. A bilinear operator $T$ is said to be associated with the kernel $K$ if
\begin{equation}\label{eq:rep}
T(f_1,f_2)(x)=\int_G \int_G K(x,y_1,y_2)\,f_1(y_1)\,f_2(y_2)\,d\mu(y_1)\,d\mu(y_2)
\end{equation}
for all $f_1,f_2\in \mathcal{S}(G)$ and all
$x\notin \operatorname{supp}(f_1)\cap \operatorname{supp}(f_2)$.
\end{Definition}

In the subsequent sections, $C$ denotes a positive constant, which may vary from line to line but is always independent of the essential parameters. $\chi_I$ denotes the characteristic function of the measurable coset $I$ and for any exponent $p$ we denote its conjugate index by $p'$; that is, $\frac{1}{p}+\frac{1}{p'}=1$.
    For any \(I=x+G_l\), \(x \in G~and~l \in \mathbb{Z}\), write \(I_j=x+G_{l+j}\), $j \in \mathbb{Z}$.

\section{An Endpoint Weak-Type Estimate}
Grafakos and Torres proved the endpoint weak-type estimates of the form $L^1\times L^1$ into $L^{1/2,\infty}$ for bilinear Calderón–Zygmund operators on $R^n$ in \cite{24}. We use a slightly different approach than \cite{24}, we avoid the boundedness property of Marcinkiewicz operator similar to \cite {MAL}.\\
\begin{theorem}
     Let T be a bilinear Calderón--Zygmund operator as defined in \eqref{eq:rep}. Assume that, for some $1 \le p,q \le \infty$ and $0 < r < \infty$ satisfying
\[
\frac{1}{p} + \frac{1}{q} = \frac{1}{r},
\]
the operator $T$ extends to a bounded bilinear mapping from $L^p(G)\times L^q(G)$ into $L^{r,\infty}(G)$. Then $T$ extends to a bounded bilinear mapping from $L^1(G) \times L^1(G)$ into $L^{\frac{1}{2},\infty}(G)$.
\end{theorem}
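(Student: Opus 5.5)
We follow the Grafakos--Torres scheme: a simultaneous Calder\'on--Zygmund decomposition of $f_1,f_2$ at height $(\lambda\,\cdot)^{1/2}$, with the Euclidean cubes replaced by the cosets in $\mathcal{I}$. By homogeneity we may assume $\|f_1\|_{L^1(G)}=\|f_2\|_{L^1(G)}=1$, and it suffices to show
\[
|\{x:|T(f_1,f_2)(x)|>\lambda\}|\le C\lambda^{-1/2}
\]
for $f_1,f_2\in\mathcal{S}(G)$, followed by a density argument. Fix $\alpha=\gamma\lambda^{1/2}$ with $\gamma$ to be chosen. Since the cosets of the $G_n$ are nested or disjoint, the dyadic-type stopping time is exact: take the maximal cosets $I$ with $\frac1{|I|}\int_I|f_j|>\alpha$. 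This gives $f_j=g_j+b_j$ with $\|g_j\|_\infty\le B\alpha$ (using \eqref{eq:3} to pass from a maximal coset to its parent), $\|g_j\|_1\le 1$, and $b_j=\sum_k b_{j,k}$, where $b_{j,k}$ is supported on a coset $I_{j,k}$, has mean zero, and $\|b_{j,k}\|_1\le 2B\alpha|I_{j,k}|$. Moreover $\sum_k|I_{j,k}|\le \alpha^{-1}$, which is the weak $(1,1)$ bound for $M$.

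Split $T(f_1,f_2)=T(g_1,g_2)+T(g_1,b_2)+T(b_1,g_2)+T(b_1,b_2)$. For the good--good term, apply the hypothesis $L^p\times L^q\to L^{r,\infty}$ together with $\|g_j\|_{p}\le (B\alpha)^{1-1/p}$. This gives a measure of at most $C\lambda^{-r}\alpha^{r(2-1/p-1/q)}=C\lambda^{-r}\alpha^{2r-1}$, which equals $C\gamma^{2r-1}\lambda^{-1/2}$. The cases $p$ or $q=\infty$ are handled the same way. Remove the exceptional set $E^*=\bigcup_{j,k}(I_{j,k})_{-1}$, where we enlarge each coset to its parent and the notation $I_{-1}$ is the one fixed in Section 2. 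Its measure is at most $CB\alpha^{-1}\approx\gamma^{-1}\lambda^{-1/2}$.

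Off $E^*$, each of the three bad terms is controlled in $L^{1/2}$ via Chebyshev using kernel smoothness. For $x\notin E^*$ and $y_1\in I_{1,k}$ with centre $c_k$, the ultrametric property gives $d(x,y_1)=d(x,c_k)\ge m_{l}^{-1}$ once $x$ is outside the parent coset. Hence $d(y_1,c_k)\le\frac12 d(x,y_1)$ holds automatically, up to the factor supplied by \eqref{eq:3}; if necessary, enlarge by a fixed number of levels to secure the $\tfrac12$ in \eqref{eq:smooth2}. The cancellation of $b_{1,k}$ then gives
\[
|T(b_{1,k},g_2)(x)|\le C\|b_{1,k}\|_1\|g_2\|_\infty\int_G\frac{|I_{1,k}|^{\varepsilon}}{(d(x,c_k)+d(x,y_2))^{2+\varepsilon}}\,dy_2 .
\]
The integral in $y_2$ is computed by summing over the shells $G_n\setminus G_{n+1}$ with \eqref{eq:4}, giving $C|I_{1,k}|^\varepsilon d(x,c_k)^{-1-\varepsilon}$. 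Summing over $x$-shells outside $(I_{1,k})_{-1}$ yields $\int_{(E^*)^c}|T(b_1,g_2)|\le C\alpha\sum_k\alpha|I_{1,k}|\le C\alpha$. Chebyshev at level $\lambda$ then gives $C\alpha/\lambda=C\gamma\lambda^{-1/2}$. Rather than $L^1$ we can also use the $L^{1/2}$ quasi-norm, which is subadditive in the form $\|\sum F_k\|_{1/2}^{1/2}\le\sum\|F_k\|_{1/2}^{1/2}$. The term $T(g_1,b_2)$ is symmetric.

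The main obstacle is the bad--bad term $T(b_1,b_2)=\sum_{k,l}T(b_{1,k},b_{2,l})$, where $g$ no longer supplies an $L^\infty$ bound. Here we work directly in $L^{1/2}$. For a fixed pair with, say, $|I_{1,k}|\le|I_{2,l}|$, use cancellation in the variable of the smaller coset to get
\[
|T(b_{1,k},b_{2,l})(x)|\le C\|b_{1,k}\|_1\|b_{2,l}\|_1\frac{|I_{1,k}|^\varepsilon}{(d(x,c_{1,k})+d(x,c_{2,l}))^{2+\varepsilon}} .
\]
Then integrate the $1/2$-power over $x\notin E^*$ and sum over $k,l$. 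Two facts make this tractable: the ultrametric structure means $d(x,c_{1,k})+d(x,c_{2,l})\gtrsim\max(d(x,c_{1,k}),d(x,c_{2,l}))$, and it also means two cosets are nested or disjoint. The target bound for the sum is
\[
\sum_{k,l}\int|T(b_{1,k},b_{2,l})|^{1/2}\le C\alpha\Big(\sum_k|I_{1,k}|\Big)^{1/2}\Big(\sum_l|I_{2,l}|\Big)^{1/2}\le C .
\]
Chebyshev in $L^{1/2}$ then gives measure at most $C\lambda^{-1/2}$. If the double sum is awkward, I would instead bound the product pointwise by $\alpha^2\prod_j M$-type tails $\sum_k |I_{j,k}|^{1+\varepsilon/2}d(x,c_{j,k})^{-1-\varepsilon/2}$, which are Marcinkiewicz-type functions. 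I would then estimate their $L^1$ norms directly via \eqref{eq:4}, avoiding the Marcinkiewicz boundedness as the paper indicates, and apply Cauchy--Schwarz to the $1/2$-power. Finally, choose $\gamma=1$, collect the four estimates, and extend from $\mathcal{S}(G)$ by density.
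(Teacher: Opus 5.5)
Your route is the paper's: Calder\'on--Zygmund decomposition at height $\approx\lambda^{1/2}$, the good--good term from the $L^p\times L^q\to L^{r,\infty}$ hypothesis, and removal of the parent cosets. The bad--good term is handled via cancellation, \eqref{eq:smooth2} and the shell sums \eqref{eq:4}, and each bad--bad pair via cancellation in the smaller coset. What fails is your primary plan for $T(b_1,b_2)$, which sums $\int_{(E^*)^c}|T(b_{1,k},b_{2,l})|^{1/2}$ over pairs.

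Write $u_k(x)=|I_{1,k}|^{1+\varepsilon/2}d(x,c_{1,k})^{-1-\varepsilon/2}$ and define $v_l$ likewise. Your pair estimate gives $|T(b_{1,k},b_{2,l})|\lesssim\alpha^2u_kv_l$ off $E^*$. The plan must therefore control $\alpha\int(\sum_k\sqrt{u_k})(\sum_l\sqrt{v_l})$ rather than $\alpha\int(\sum_ku_k)^{1/2}(\sum_lv_l)^{1/2}$. When many bad cosets cluster, this loses roughly the geometric mean of their numbers.

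Concretely, on the Cantor group $D$ let $f_1=f_2=3\alpha\sum_J\chi_J$. The sum runs over $2^{N-1}$ cosets $J$ of $D_1$, one in each coset of $D_{-1}$ contained in $D_{-N}$, where $2^N\approx\alpha^{-1}$. Then:
\begin{itemize}
\item the bad cosets are the $D_0$-cosets containing the $J$'s, with $\|b_{j,k}\|_1=\tfrac32\alpha$, and $E^*=D_{-N}$;
\item for every pair your bound is $\approx\alpha\int_{|x|>2^N}|x|^{-1-\varepsilon/2}\,dx\approx\alpha2^{-N\varepsilon/2}$;
\item over the $\approx2^{2N}$ pairs this sums to $\approx\alpha^{-(1-\varepsilon/2)}$, which is unbounded as $\lambda\to0$ when $\varepsilon<2$;
\item the right-hand side of your target is only $\approx\alpha2^N\approx1$.
\end{itemize}
The nested-or-disjoint structure of cosets does not change this count.

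The bound you wrote is what you need, but with the sum inside the power. Off $E^*$ one has $|T(b_1,b_2)|\le C\alpha^2M_1M_2$, with $M_1=\sum_ku_k$ and $M_2=\sum_lv_l$. By \eqref{eq:4}, $\int_{(E^*)^c}M_j\le C\sum_k|I_{j,k}|\le C\alpha^{-1}$. Cauchy--Schwarz then gives $\int_{(E^*)^c}|T(b_1,b_2)|^{1/2}\le C\alpha\|M_1\|_{L^1((E^*)^c)}^{1/2}\|M_2\|_{L^1((E^*)^c)}^{1/2}\le C$. Chebyshev in $L^{1/2}$ yields $C\lambda^{-1/2}$.

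This is your fallback, and you should make it the main argument. It is precisely the paper's route, and it also repairs the paper's final step. That step bounds $\int_{G\setminus\Omega^*}|T(b_1,b_2)|$ by the product of two separate integrals, then applies Chebyshev with exponent $1/2$ to $\int|T(b_1,b_2)|$. Neither move is valid as written; your Cauchy--Schwarz on $(M_1M_2)^{1/2}$ is the correct way to reach the same bound.
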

\begin{proof}
    Fix $\lambda > 0$ and let $f_1, f_2 \in L^1(G)$. Assuming, without loss of generality, that $\|f_j\|_1 = 1$, $j = 1,2$, we need to prove that
    \begin{equation}\label{eq:estimate}
        |E_\lambda|=
|\left\{ x \in G : |T(f_1,f_2)(x)| > \lambda \right\}|
\le C \lambda^{-1/2}
    \end{equation}
for some constant $C$ independent of $f_1, f_2$, and $\lambda$.
Consider the Calderón--Zygmund decomposition of each function $f_j$ at height $\lambda^{1/2}$. Then, for $j = 1,2$, we have
\[
f_j = g_j + b_j
~\text{and}~ b_j = \sum_k b_{j,k},\]
where each $b_{j,k}$ is supported in coset $I_{j,k}$, for $k\neq k'$, the cosets $I_{j,k} ~\text{and}~I_{j,k'}$ are disjoint. Also,
\begin{enumerate}[label=(\roman*)]
    \item  $\int_{I_{j,k}} b_{j,k}(x)\,dx = 0$,
\item $\int_{I_{j,k}} |b_{j,k}(x)|\,dx \le C (\lambda)^{1/2} |I_{j,k}|$,
\item $\sum_k |I_{j,k}| \le C (\lambda)^{-1/2}$,
\item $\|g_j\|_{L^{s}} \le C (\lambda)^{1/2s'}$, for any $1\leq s\leq \infty$, where $s'$ is the conjugate exponent of $s$,
\item $\|b_j\|_{L^1} \le C$.
\end{enumerate}
Now define the level sets:
\begin{align*}
E_1 &= \{ x \in G : |T(g_1, g_2)(x)| > \lambda/4 \}, \\
E_2 &= \{ x \in G : |T(b_1, g_2)(x)| > \lambda/4 \}, \\
E_3 &= \{ x \in G : |T(g_1, b_2)(x)| > \lambda/4 \}, \\
E_4 &= \{ x \in G : |T(b_1, b_2)(x)| > \lambda/4 \}.
\end{align*}

Since
$
\big|\{ x \in G : |T(f_1,f_2)(x)| > \lambda\}\big|
\le \sum_{s=1}^4 |E_s|,
$
it suffices to prove estimate \eqref{eq:estimate} for each $E_s$. Let's first consider $E_1 = \{ x \in G : |T(g_1,g_2)(x)| > \lambda/4 \}.$ By Chebychev's inequality, and  $L^p(G) \times L^q(G) \rightarrow{L^{r,\infty}(G)}$ boundedness of $T$ with norm $D$
\begin{align*}
|E_1|
&\le \left(\frac{4}{\lambda}\right)^r\|T(g_1,g_2)\|_{L^{r,\infty}(G)}^r\\
&\le \left(\frac{4D}{\lambda}\right)^r 
\|g_1\|_{L^{p}(G)}^r 
\|g_2\|_{L^{q}(G)}^r\\
&\leq C\left(\frac{D}{\lambda}\right)^r \lambda^{\frac{r}{2p'}}\lambda^{\frac{r}{2q'}}\\
&=CD^r\lambda^{\frac{-1}{2}}.
\end{align*}
Let $I_k=x+G_{l+k}$ be a coset and let $I_{k}^*=x+G_{l+k-1}$ be an expansion of it. By \eqref{eq:3}, the expansion $I_{k}^*=x+G_{l+k-1}$ satisfies $|I_{k}^*| \leq B|I_k|$. Let $\Omega^* = \bigcup_{j=1}^2\bigcup_k I_{j,k}^*$, then, by (iii) above, we have 
\begin{equation}\label{eq;omega}
    |\Omega^*|= \sum_{j=1}^2 \sum_{k}|I_{j,k}^*|
\le B \sum_{j=1}^2 \sum_{k}|I_{j,k}|
= C\lambda^{-1/2}.
\end{equation}
Now,
\[
|E_2| = |\{ x \in G : |T(b_1,g_2)(x)| > \lambda/4 \}|\leq |x\in \Omega^*|+|x\notin \Omega^*:|T(b_1,g_2)(x)|>\lambda/4|.
\]
In view of \eqref{eq;omega}, we only need to estimate $|x\notin \Omega^*:|T(b_1,g_2)(x)|>\lambda/4|.$
Fix $x \notin \Omega^*$ and fix $I_{1,k_1}=c_{1,k_1}+G_{l+k_1}$. By the mean zero condition (i) of $b_{1,k_1}$, for any fixed $y_2\in G$, 
\[
\int_{I_{1,k_1}} K(x,y_1,y_2)\, b_{1,k_1}(y_1)\, dy_1
=
\int_{I_{1,k_1}} \bigl[ K(x,y_1,y_2) - K(x,c_{1,k_1},y_2) \bigr] b_{1,k_1}(y_1)\, dy_1.
\]

Since $x \notin I_{1,k_1}^*$ and $y_1 \in I_{1,k_1}$, the ultrametric property gives
\[
d(x,y_1) = d(x,c_{1,k_1}) \quad \text{for all } y_1 \in I_{1,k_1}.
\]

Therefore, $d(y_1,c_{1,k_1}) \le |(I_{1,k_1}|
\le d(x,c_{1,k_1}) = d(x , y_1)$ and hence 
\[d(y_1,c_{1,k_1}) \le |I_{1,k_1}|
\le \frac{1}{2} ~\text{max}~\{ d(x,y_1) ,d(x , y_2)\}.\] Applying \eqref{eq:smooth2}, we can estimate:
\begin{align*}
\left|\int_{I_{1,k_1}} K(x,y_1,y_2)\, b_{1,k_1}(y_1)\, dy_1\right|
&\le \int_{I_{1,k_1}}|b_{1,k_1}(y_1)|C \frac{d(y_1,c_{1,k_1})^{\varepsilon}}{\big(d(x,y_1)+d(x,y_2)\big)^{2+\varepsilon}}dy_1\\
&\le \int_{I_{1,k_1}}|b_{1,k_1}(y_1)|C \frac{|I_{1,k_1}|^{\varepsilon}}{\big(d(x,c_{1,k_1})+d(x,y_2)\big)^{2+\varepsilon}}dy_1.
\end{align*}
Multiplying by $|g_2(y_2)|$ and integrating over $y_2$, we obtain 
\begin{align*}
\int_G |g_2(y_2)| \left|\int_{I_{1,k_1}} K(x,y_1,y_2)\, b_{1,k_1}(y_1)\, dy_1 \right| dy_2
&\le
C\, |I_{1,k_1}|^{\varepsilon}\, \|b_{1,k_1}\|_{L^1(G)}
\int_G \frac{|g_2(y_2)|}{\bigl(d(x , c_{1,k_1}) + d(x , y_2)\bigr)^{2+\varepsilon}}\, dy_2\\
&\le
C  |I_{1,k_1}|^{\varepsilon}\, \|b_{1,k_1}\|_{L^1(G)}\|g_2\|_{L^\infty(G)}
\int_G \frac{dy_2}{\bigl(d(x , c_{1,k_1}) + d(x , y_2)\bigr)^{2+\varepsilon}}.
\end{align*}
Decompose $G$ as, 
$
G = \bigcup_{j \in \mathbb{Z}} S_j,  \text{where}~
S_j = \{ y_2 \in G : d(x,y_2) = m_j^{-1} \}.
$
Then
\begin{align*}
\int_G \frac{dy_2}{\big(d(x,c_{1,k_1}) + d(x,y_2)\big)^{2+\varepsilon}}
&= \sum_j \int_{S_j} 
\frac{dy_2}{\big(d(x,c_{1,k_1}) + m_j^{-1}\big)^{2+\varepsilon}}\\
&\le \sum_j \frac{m_j^{-1}}{\big(d(x,c_{1,k_1}) + m_j^{-1}\big)^{2+\varepsilon}}.
\end{align*}
Let $k$ be such that
$d(x,c_{1,k_1})= m_k^{-1}$, and using \eqref{eq:4}, we write 
\begin{align*}
    \int_G \frac{dy_2}{\big(d(x,c_{1,k_1}) + d(x,y_2)\big)^{2+\varepsilon}} 
    &\leq\sum_{j < k}\frac{m_j^{-1}}{\big(m_k^{-1} + m_j^{-1}\big)^{2+\varepsilon}} + \sum_{j \geq k}\frac{m_j^{-1}}{\big(m_k^{-1} + m_j^{-1}\big)^{2+\varepsilon}}\\
    &\leq\sum_{j < k}\frac{m_j^{-1}}{\big( m_j^{-1}\big)^{2+\varepsilon}} + \sum_{j \geq k}\frac{m_j^{-1}}{\big(m_k^{-1} \big)^{2+\varepsilon}}\\
    &=\sum_{j < k} m_j^{1+\varepsilon} + \frac{1}{{\big(m_k^{-1} \big)^{2+\varepsilon}}}\sum_{j \geq k}m_j^{-1}\\
    &\leq Cm_{k}^{1+\varepsilon}+\frac{1}{{\big(m_k^{-1})^{2+\varepsilon}}} Cm_k^{-1}\\
    &= \frac{C}{d(x,c_{1,k_1})^{1+\varepsilon}}.
\end{align*}
This implies 
\begin{align*}
  \int_G |g_2(y_2)| \left|\int_{I_{1,k_1}} K(x,y_1,y_2)\, b_{1,k_1}(y_1)\, dy_1 \right| dy_2=|T(b_{1,k_1},g_2)(x)|
  &\leq \frac {C  |I_{1,k_1}|^{\varepsilon}\, \|b_{1,k_1}\|_{L^1(G)}\|g_2\|_{L^\infty(G)}}{d(x,c_{1,k_1})^{1+\varepsilon}}\\
  &\leq\frac {C  |I_{1,k_1}|^{1+\varepsilon}\,\lambda} {d(x,c_{1,k_1})^{1+\varepsilon}}.
\end{align*}
Now integrating with respect to $x$ over $G\setminus I_{1,k_1}^*$, we get 
\[
\int_{G\setminus I_{1,k_1}^*} |T(b_{1,k_1}, g_2)(x)| \, dx
\le C(\lambda) |I_{1,k_1}|^{1+\varepsilon}
\int_{d(x , c_{1,k_1}) > \frac{1}{m_{l+k_1-1}}}
\frac{dx}{d(x ,c_{1,k_1})^{1+\varepsilon}}.\\
\]
Write
$
\{ x : d(x , c_{1,k_1}) > \tfrac{1}{m_{l+k_1-1}} \}
= \bigcup_{j\leq l+k_1} S_j,
$
where
$
S_j = \{ x : d(x , c_{1,k_1}) = \tfrac{1}{m_j} \}
$
and using \eqref{eq:4}, we get 
\begin{align*}
 \int_{G\setminus I_{1,k_1}^*} |T(b_{1,k_1}, g_2)(x)| \, dx
&\le C(\lambda) |I_{1,k_1}|^{1+\varepsilon}\sum_{j \leq l+k_1} \int_{S_j}
\frac{dx}{(m_j^{-1})^{1+\varepsilon}}\\
&\le  C(\lambda) |I_{1,k_1}|^{1+\varepsilon}\sum_{j \leq l+k_1}
\frac{m_j^{-1}}{(m_j^{-1})^{1+\varepsilon}}\\
&= \ C(\lambda) |I_{1,k_1}|^{1+\varepsilon}\sum_{j \leq l+k_1} m_j^{\varepsilon}\\
&\le  C(\lambda) |I_{1,k_1}|^{1+\varepsilon} \cdot m_{l+k_1}^{\varepsilon}\\
&= C(\lambda) \frac{1}{(m_{l+k_1})^{1+\varepsilon}} \cdot m_{l+k_1}^{\varepsilon}
= C(\lambda) m_{l+k_1}^{-1}.
\end{align*}
Now taking summation over all $k_1$, we get 
\begin{align*}
    \int_{G\setminus \Omega^*}|T(b_{1}, g_2)(x)|dx &= \sum_{k_1}\int_{G\setminus I_{1,k_1}^*} |T(b_{1,k_1}, g_2)(x)| \, dx\\
    &\leq C(\lambda) \sum_{k_1} m_{l+k_1}^{-1}= C(\lambda) \sum_{k_1} |I_{1,k_1}|\\
    &\leq C(\lambda)(\lambda^{-1/2})=C(\lambda)^{1/2}.
\end{align*}
Finally, applying Chebychev's inequality, we get 
$$|x\notin \Omega^*:|T(b_1,g_2)(x)|>\lambda/4|\leq \frac{4}{\lambda}\int_{G\setminus \Omega^*}|T(b_{1}, g_2)(x)|dx=C\frac{4}{\lambda}(\lambda)^{1/2}=C(\lambda)^{-1/2}.
$$
The estimate for $E_3$ is similar to that for $E_2$.
Now to estimate $E_4$, we need to estimate $|\{x\notin\Omega^*:|T(b_1,b_2)(x)| > \lambda/4\}$. Since 
$
b_j = \sum_{k_j} b_{j,k_j}, \quad j=1,2.
$

Thus,
\[
T(b_1,b_2)(x) = \sum_{k_1} \sum_{k_2} T(b_{1,k_1}, b_{2,k_2})(x).
\]
Let $
I_{j,k_j} = c_{j,k_j} + G_{l+k_j}, \quad j=1,2.
$ We fix a pair $(k_1,k_2)$.
Without loss of generality, assume that $
m_{l+k_1}^{-1} \le m_{l+k_2}^{-1}
$, that is, $|I_{1,k_1}|\leq |I_{2,k_2}|$.
Using $\int b_{1,k_1} = 0$, we write
\[
\int_{I_{1,k_1}} K(x,y_1,y_2)\, b_{1,k_1}(y_1)\, dy_1
=
\int_{I_{1,k_1}} \big[ K(x,y_1,y_2) - K(x,c_{1,k_1},y_2) \big] b_{1,k_1}(y_1)\, dy_1.
\]
Since $x \notin I_{1,k_1}^*$ and $y_1 \in I_{1,k_1}$, we have
$
d(x,y_1)= d(x,c_{1,k_1}), \forall\, y_1 \in I_{1,k_1}.
$
Similarly, $x \notin I_{2,k_2}^*$ and $y_2 \in I_{2,k_2}$ impply $
d(x,y_2)=d(x,c_{2,k_2}),  \forall\, y_2 \in I_{2,k_2}.
$
Now,
\[
d(y_1,c_{1,k_1}) \le \frac{1}{m_{l+k_1}} = |I_{1,k_1}|
\le d(x,c_{1,k_1}) = d(x,y_1).
\]

So,
\[
d(y_1,c_{1,k_1}) \le \frac{1}{2} \max\{ d(x,y_1), d(x,y_2) \}.
\]
Taking absolute values, we obtain
\begin{align*}
\left| \int_{I_{1,k_1}} K(x,y_1,y_2)\, b_{1,k_1}(y_1)\, dy_1\right|&=
    \left| \int_{I_{1,k_1}} \big[ K(x,y_1,y_2) - K(x,c_{1,k_1},y_2) \big] b_{1,k_1}(y_1)\,dy_1 \right|\\
    &\le \int_{I_{1,k_1}} 
\frac{C\, d(y_1,c_{1,k_1})^\varepsilon}{\big(d(x,y_1)+d(x,y_2)\big)^{2+\varepsilon}}
|b_{1,k_1}(y_1)|\,dy_1\\
&\le C\, (m_{l+k_1}^{-1})^{\varepsilon}
\int_{I_{1,k_1}} 
\frac{|b_{1,k_1}(y_1)|}{\big(d(x,c_{1,k_1})+d(x,c_{2,k_2})\big)^{2+\varepsilon}}
\,dy_1\\
&= \frac{C\, (m_{l+k_1}^{-1})^{\varepsilon}\|b_{1,k_1}\|_{{L^1}(G)}}{\big(d(x,c_{1,k_1})+d(x,c_{2,k_2})\big)^{2+\varepsilon}}.
\end{align*}
Multiplying by $|b_{2,k_2}(y_2)|$ and integrating over $y_2$, we obtain
\begin{align*}
    \int_{I_{2,k_2}} |b_{2,k_2}(y_2)|
\left| \int_{I_{1,k_1}} K(x,y_1,y_2) b_{1,k_1}(y_1)\,dy_1 \right| dy_2&
\le 
\frac{C\, (m_{l+k_1}^{-1})^{\varepsilon}\|b_{1,k_1}\|_{{L^1}(G)}}{\big(d(x,c_{1,k_1})+d(x,c_{2,k_2})\big)^{2+\varepsilon}}
\int_{I_{2,k_2}} |b_{2,k_2}(y_2)|\,dy_2\\
&= \frac{C\, (m_{l+k_1}^{-1})^{\varepsilon}\|b_{1,k_1}\|_{{L^1}(G)}\, \|b_{2,k_2}\|_{{L^1}(G)}}
{\big(d(x,c_{1,k_1})+d(x,c_{2,k_2})\big)^{2+\varepsilon}}.
\end{align*}
Thus,
\[
|T(b_{1,k_1}, b_{2,k_2})(x)| 
\le 
\frac{C\, (m_{l+k_1}^{-1})^{\varepsilon}\|b_{1,k_1}\|_{{L^1}(G)}\, \|b_{2,k_2}\|_{{L^1}(G)}}
{\big(d(x,c_{1,k_1})+d(x,c_{2,k_2})\big)^{2+\varepsilon}}.
\]
Since $|I_{1,k_1}|\leq |I_{2,k_2}|$, we obtain  
$$
m_{l+k_1}^{-\varepsilon}
=
m_{l+k_1}^{-\varepsilon/2} \, m_{l+k_1}^{-\varepsilon/2}
\le
m_{l+k_1}^{-\varepsilon/2} \, m_{l+k_2}^{-\varepsilon/2}.
$$ Consequently, we have 
\begin{align*}
   |T(b_{1,k_1}, b_{2,k_2})(x)|
&\le
C \,
\frac{
m_{l+k_1}^{-\varepsilon/2} \, m_{l+k_2}^{-\varepsilon/2}
\|b_{1,k_1}\|_{{L^1}(G)} \, \|b_{2,k_2}\|_{{L^1}(G)}
}{
d(x,c_{1,k_1})^{1+\varepsilon/2} \,
d(x,c_{2,k_2})^{1+\varepsilon/2}
}\\
&\le
C \,
\frac{
m_{l+k_1}^{-\varepsilon/2} \, m_{l+k_2}^{-\varepsilon/2}
(\lambda)^{1/2}|I_{1,k_1}| \,(\lambda)^{1/2} |I_{2,k_2}|
}{
d(x,c_{1,k_1})^{1+\varepsilon/2} \,
d(x,c_{2,k_2})^{1+\varepsilon/2}
}\\
&=
\frac{
C(\lambda)^{1/2}\,
m_{l+k_1}^{-(1+\varepsilon/2)} \,(\lambda)^{1/2}
m_{l+k_2}^{-(1+\varepsilon/2)}
}{
d(x,c_{1,k_1})^{1+\varepsilon/2} \,
d(x,c_{2,k_2})^{1+\varepsilon/2}
}. 
\end{align*}
summing over all $k_1$ and $k_2$, we obtain
\[
|T(b_1,b_2)(x)|
\le
C(\lambda)
\left(
\sum_{k_1}
\frac{m_{l+k_1}^{-(1+\varepsilon/2)}}
{d(x,c_{1,k_1})^{1+\varepsilon/2}}
\right)
\left(
\sum_{k_2}
\frac{m_{l+k_2}^{-(1+\varepsilon/2)}}
{d(x,c_{2,k_2})^{1+\varepsilon/2}}
\right).
\]
integrating with respect to $x$ over $G\setminus \Omega^*$, we obtain
\[
\int_{G\setminus \Omega^*}|T(b_1,b_2)(x)|
\le
C(\lambda)
\left(
\sum_{k_1}\int_{G\setminus I_{1,k_1}^*}
\frac{m_{l+k_1}^{-(1+\varepsilon/2)}}
{d(x,c_{1,k_1})^{1+\varepsilon/2}}
\right)
\left(
\sum_{k_2}\int_{G\setminus I_{2,k_2}^*}
\frac{m_{l+k_2}^{-(1+\varepsilon/2)}}
{d(x,c_{2,k_2})^{1+\varepsilon/2}}
\right).
\]
Using the same argument as in $E_2$, we have 
$$\sum_{k_i}\int_{G\setminus I_{i,k_i}^*}
\frac{m_{l+k_i}^{-(1+\varepsilon/2)}}
{d(x,c_{i,k_i})^{1+\varepsilon/2}}
\leq C\lambda^{-1/2},\quad i=1,2.$$
Finally, applying Chebychev's inequality, we obtained 
$$|x\notin \Omega^*:|T(b_1,b_2)(x)|>\lambda/4|\leq \left(\frac{4}{\lambda}\right)^{\frac{1}{2}}\int_{G\setminus \Omega^*}|T(b_{1}, b_2)(x)|dx\leq C(\lambda)^{-1/2}.
$$
This completes the proof.
\end{proof}

\medskip

The above result is sharp in the sense that $L^{1/2,\infty}(G)$ cannot be replaced by $L^{1/2}(G).$ The following example illustrates this fact.
\medskip

\begin{example}
    Let $T$ be the bilinear operator associated with the kernel
\[
K(x,y_1,y_2) = \frac{1}{(d(x,y_1)+d(x,y_2))^2}, \quad x \neq y_1, y_2,
\]
on a Vilenkin group $G$. We will show that this operator maps $L^1(G)\times L^1(G)$ into $L^{1/2,\infty}(G)$, but it does not map $L^1(G)\times L^1(G)$ into $L^{1/2}(G)$.
Let $f_1 = f_2 = \chi_I$, where $I \subset G$ is a fixed coset. Then
\[
T(f_1,f_2)(x)
= \int_G \int_G K(x,y_1,y_2) f_1(y_1) f_2(y_2)\, dy_1dy_2
= \int_I \int_I \frac{1}{(d(x,y_1)+d(x,y_2))^2}\, dy_1dy_2.
\]

Now fix $x, x'\notin I$. It is easy to verify that $K$ satisfies the size condition
\[
|K(x,y_1,y_2)| \le \frac{C}{(d(x,y_1)+d(x,y_2))^2}.
\]
Since $y_1, y_2$ represent the same coset $I$ and $x\notin I$ we get $d(x,y_1)=d(x,y_2).$
Moreover, if $d(x,x') \le \frac{1}{2} \max\{d(x,y_1), d(x,y_2)\}$, then by the isosceles triangle property of the ultrametric spaces \[
d(x,y_i) = d(x',y_i), \quad i=1,2,
\]
and hence
\[
K(x,y_1,y_2) = K(x',y_1,y_2).
\]
Similarly, the regularity conditions in the $y_1$ and $y_2$ variables also hold. Therefore, $K$ is a bilinear Calderón--Zygmund kernel. Hence, for $y_1,y_2 \in I$,\[
T(f_1,f_2)(x)
= \int_I \int_I \frac{1}{(d(x,y_1)+d(x,y_2))^2}\, dy_1dy_2.
\]
\[
T(f_1,f_2)(x) = \frac{C}{d(x,y_1)^2}.
\]
Consequently,
\[
|\{x \in G : |T(f_1,f_2)(x)| > \lambda\}|
= |\{x \in G : d(x,y_1) < C \lambda^{-1/2}\}|
=C \lambda^{-1/2},
\]
which implies that $T(f_1,f_2) \in L^{1/2,\infty}(G)$.

On the other hand,
\[
|T(f_1,f_2)(x)|^{1/2} = \frac{C}{d(x,y_1)},
\]
and hence
\[
\int_G |T(f_1,f_2)(x)|^{1/2}\, dx
=\int_G \frac{C}{d(x,y_1)}\, dx = \infty.
\]
\end{example}

\section{Boundedness on the Product of Lebesgue Spaces}
In this section we prove the boundedness theorem of bilinear Calder\'on--Zygmund operators on the products of Lebesgue spaces on Vilenkin groups. We also establish the necessary lemmas to prove our main theorem. We avoid the proof of Lemma~\ref{Lemma 3.1} because it is an easy consequence of the Euclidean case, see \cite{Wan}.

\begin{Lemma} \label{Lemma 3.1}
Let \(1<p,p_1,p_2<\infty\) such that
\[
\frac{1}{p}=\frac{1}{p_1}+\frac{1}{p_2}.
\]
If $f \in L^{p_1}(G)$ and $g \in L^{p_2}(G)$, then $fg \in L^p(G)$ and
\[
\|fg\|_{L^p(G)} \le \|f\|_{L^{p_1(G)}} \|g\|_{L^{p_2}(G)},
\quad \text{for every } f\in L^{p_1}(G),\ g\in L^{p_2}(G).
\]
\end{Lemma}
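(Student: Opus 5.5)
This is Hölder's inequality on the measure space $(G,dx)$. The plan is to reduce it to the classical two-function Hölder inequality with a conjugate pair of exponents. Since $\frac1p=\frac1{p_1}+\frac1{p_2}$ with $1<p<\infty$, we have $p<p_1$ and $p<p_2$. Hence the exponents
\[
s=\frac{p_1}{p},\qquad s'=\frac{p_2}{p}
\]
satisfy $1<s,s'<\infty$ and $\frac1s+\frac1{s'}=\frac{p}{p_1}+\frac{p}{p_2}=1$, so they are conjugate.

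\textbf{Step 1 (measurability).} Since $f$ and $g$ are measurable, so is $fg$. Moreover $|fg|^p=|f|^p|g|^p$ is a nonnegative measurable function, so $\int_G|fg|^p\,dx$ is well defined in $[0,\infty]$.

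\textbf{Step 2 (Hölder with $s,s'$).} Apply the classical Hölder inequality on $(G,dx)$, valid on any measure space, to the nonnegative functions $|f|^p$ and $|g|^p$ with exponents $s$ and $s'$:
\[
\int_G |f|^p|g|^p\,dx\le \Bigl(\int_G |f|^{ps}\,dx\Bigr)^{1/s}\Bigl(\int_G |g|^{ps'}\,dx\Bigr)^{1/s'}
=\|f\|_{L^{p_1}(G)}^{p}\,\|g\|_{L^{p_2}(G)}^{p}.
\]
The equality uses $ps=p_1$, $ps'=p_2$, $1/s=p/p_1$ and $1/s'=p/p_2$.

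\textbf{Step 3 (conclusion).} Taking $p$-th roots gives $\|fg\|_{L^p(G)}\le\|f\|_{L^{p_1}(G)}\|g\|_{L^{p_2}(G)}<\infty$, so $fg\in L^p(G)$.

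If one wants the argument self-contained, the classical Hölder inequality used in Step 2 follows from Young's inequality $ab\le a^s/s+b^{s'}/s'$. Apply it pointwise after normalizing both factors to unit norm, then integrate. The trivial cases where one of the norms vanishes are handled separately, since then $fg=0$ almost everywhere.

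There is no genuine obstacle. The only point requiring care is checking that $s=p_1/p$ and $s'=p_2/p$ are conjugate exponents strictly between $1$ and $\infty$, which is exactly where the hypothesis $\frac1p=\frac1{p_1}+\frac1{p_2}$ enters. No group structure of $G$ is used, only that Haar measure is a positive measure.
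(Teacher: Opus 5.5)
Your proof is correct. The paper does not prove this lemma itself: it only remarks that the lemma is an easy consequence of the Euclidean case and refers to Wang \cite{Wan}. You instead give the standard direct proof, applying the two-exponent H\"older inequality to $|f|^p$ and $|g|^p$ with the conjugate pair $s=p_1/p$, $s'=p_2/p$.

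Your argument also makes explicit something the paper leaves implicit. Nothing is really being transferred from $\mathbb{R}^n$: H\"older's inequality holds on an arbitrary measure space. So it holds for Haar measure on $G$ without any use of the group structure.

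Two small remarks. First, the inequalities $p<p_1$ and $p<p_2$ come from $p_1,p_2<\infty$, so that $1/p_1,1/p_2>0$. They do not come from $1<p<\infty$, as your wording suggests; the conclusion is unaffected because that hypothesis is part of the statement. Second, your argument never uses $p>1$. It proves the inequality for every $0<p<\infty$ with $\frac1p=\frac1{p_1}+\frac1{p_2}$, which is the range bilinear estimates naturally reach (e.g.\ $p_1=p_2=\frac32$ gives $p=\frac34$). The paper's statement, by contrast, is restricted to $p>1$.
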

\begin{Lemma} \label{Lemma 3.2}
Let $1<p<\infty$. Then there exists a constant $C_p>0$ such that for every
$f \in L^p(G)$,
\begin{equation}\label{eq:FS-strong}
\|M_\delta f\|_{L^p(G)} \leq C_p \, \|M^{\#}_\delta f\|_{L^p(G)}.
\end{equation}
\end{Lemma}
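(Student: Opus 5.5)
We will prove the Fefferman--Stein inequality in the form $\|M_\delta f\|_{L^p}\le C_p\|M^{\#}_\delta f\|_{L^p}$. The standard route is a good-$\lambda$ inequality, which is especially clean on $G$: the cosets in $\mathcal{I}$ form a nested (dyadic-like) family, so any two cosets are either disjoint or one contains the other. Set $h=|f|^\delta$ and $q=p/\delta$. Since $M_\delta f=(Mh)^{1/\delta}$ and $M^{\#}_\delta f=(M^{\#}h)^{1/\delta}$, the claim is equivalent to $\|Mh\|_{L^{q}}\le C\|M^{\#}h\|_{L^{q}}$. We implicitly take $0<\delta<p$ (the regime in which the lemma is used later), so that $1<q<\infty$ and $h\in L^{q}(G)$. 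For $q\le 1$ we would instead use the a priori finiteness of $\|Mh\|_{L^q}$, which the argument below needs anyway.

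\textbf{Step 1 (good-$\lambda$).} We show that there are $C>0$ and $\gamma_0$ such that for all $\lambda>0$ and $0<\gamma<\gamma_0$,
\[
|\{Mh>2\lambda,\ M^{\#}h\le\gamma\lambda\}|\le C\gamma\,|\{Mh>\lambda\}|.
\]
Because $h\in L^q$, every coset $I$ with $\frac1{|I|}\int_I h>\lambda$ has $|I|$ bounded, by H\"older. So $\{Mh>\lambda\}$ is a disjoint union of maximal cosets $Q_j$; nestedness gives both disjointness and the existence of maximal elements. Fix one such $Q_j$ and let $\widetilde Q_j$ be its parent, i.e.\ $Q_j=x+G_l$ and $\widetilde Q_j=x+G_{l-1}$. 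By maximality, $h_{\widetilde Q_j}\le\lambda$, and $|\widetilde Q_j|\le B|Q_j|$ by \eqref{eq:3}.

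Take $x\in Q_j$ with $Mh(x)>2\lambda$. Cosets containing $x$ that are not contained in $Q_j$ contain $\widetilde Q_j$, and by maximality their averages are at most $\lambda$. Hence $M(h\chi_{Q_j})(x)>2\lambda$. Next, $h\chi_{Q_j}$ and $(h-h_{\widetilde Q_j})\chi_{Q_j}$ differ by a constant at most $\lambda$ on $Q_j$, so $M\big((h-h_{\widetilde Q_j})\chi_{Q_j}\big)(x)>\lambda$. Now apply the weak $(1,1)$ bound for $M$ (first lemma of Section~3 of \cite{7}):
\[
|\{x\in Q_j: Mh>2\lambda\}|\le\frac{C}{\lambda}\int_{\widetilde Q_j}|h-h_{\widetilde Q_j}|\le \frac{C}{\lambda}|\widetilde Q_j|\,M^{\#}h(z)\le CB\gamma|Q_j|,
\]
for any $z\in Q_j$ with $M^{\#}h(z)\le\gamma\lambda$. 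If no such $z$ exists, the set we are measuring inside $Q_j$ is empty. Summing over $j$ gives Step 1.

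\textbf{Step 2 (integration).} Multiply the good-$\lambda$ inequality by $q\lambda^{q-1}$ and integrate:
\[
2^{-q}\|Mh\|_q^q\le C\gamma\|Mh\|_q^q+\gamma^{-q}\|M^{\#}h\|_q^q.
\]
Choose $\gamma$ with $C\gamma<2^{-q-1}$. Provided $\|Mh\|_q<\infty$, we can absorb the first term on the right into the left side, which gives the result. Finiteness is automatic for $q>1$ by the $L^q$ boundedness of $M$. In general one truncates, replacing $h$ by $\min(h,N)\chi_{G_{-N}}$, or proves the inequality first for $h$ in a dense class and then passes to the limit by monotone convergence.

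The main obstacle is this a priori finiteness of $\|Mh\|_q$, together with making sure the maximal cosets exist. Both are handled by $h\in L^q$ with $q>1$, which gives both finiteness of $\|Mh\|_q$ and $\frac1{|I|}\int_I h\le |I|^{-1/q}\|h\|_q\to0$ as $|I|\to\infty$. Apart from this, the ultrametric nestedness makes the covering step trivial: no Whitney or Vitali lemma is needed.
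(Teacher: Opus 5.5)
Your argument is correct for $0<\delta<p$, which covers the paper's use where $\delta<\tfrac12<1<p$. It is essentially the paper's route: the paper only cites the Fefferman--Stein argument, and your reduction $h=|f|^\delta$, $q=p/\delta$ followed by the good-$\lambda$ inequality (with nested cosets replacing the dyadic Calder\'on--Zygmund decomposition) is that argument carried out on $G$. The one slip is the aside on $q\le 1$: because $|G|=\infty$, $\|Mh\|_{L^q}=\infty$ for every nonzero $h$ when $q\le 1$, so that remark and the attached truncation comment are not a valid fallback and should be dropped, since nothing in the case $q>1$ depends on them.
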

The proof of \eqref{eq:FS-strong} has been done in \cite{9} for $M~\text{and}~ M^{\#}$ over $L^P(\mathbb{R}^n)$. The above extension involving $M_\delta~\text{and}~ M^{\#}_\delta$ can be easily verified from the arguments therein for $L^p(G).$\\

\begin{Lemma} \label{Lemma 3.3}
    Let $0<\delta<\frac12$, and let $T$ be a bilinear Calderón--Zygmund operator as defined in \eqref{eq:rep} and suppose that the measure $dx$ satisfies the condition \eqref{eq:3}. Then there exists a constant $C>0$ such that for all $f_i \in L^\infty_c(G)$, $i=1,2$ and $x \in G$
\begin{equation}\label{eq:sharp}
M^{\#}_\delta\bigl(T(f_1,f_2)\bigr)(x)
\leq C\, M(f_1)(x)\, M(f_2)(x),
\end{equation}
\end{Lemma}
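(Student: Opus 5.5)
The plan is to follow the Lerner--Ombrosi--Pérez--Torres--Trujillo-González scheme, simplified by the ultrametric geometry. Since $\bigl||a|^\delta-|b|^\delta\bigr|\le|a-b|^\delta$ for $0<\delta<1$, it is enough to show that for every coset $I\ni x$ there is a constant $c_I$ with
\[
\Bigl(\frac{1}{|I|}\int_I |T(f_1,f_2)(z)-c_I|^\delta\,dz\Bigr)^{1/\delta}\le C\,M(f_1)(x)M(f_2)(x).
\]
The estimate for $M^{\#}_\delta$ then follows, with an extra factor $2^{1/\delta}$, from the elementary inequality $\frac{1}{|I|}\int_I\bigl||h|^\delta-(|h|^\delta)_I\bigr|\le \frac{2}{|I|}\int_I\bigl||h|^\delta-|c_I|^\delta\bigr|$.

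Fix $I=x+G_l$ and let $I^*=x+G_{l-1}$ be its expansion, so that $|I^*|\le B|I|$ by \eqref{eq:3}. Split $f_j=f_j^0+f_j^\infty$ with $f_j^0=f_j\chi_{I^*}$, and expand $T(f_1,f_2)$ into the four terms $T(f_1^{\alpha},f_2^{\beta})$ with $\alpha,\beta\in\{0,\infty\}$. Choose
\[
c_I=\sum_{(\alpha,\beta)\neq(0,0)}T(f_1^\alpha,f_2^\beta)(x).
\]
This is well defined because $x$ lies outside the support of at least one of the functions in each such term, so the kernel representation \eqref{eq:rep} applies after a density argument from $\mathcal{S}(G)$ to $L^\infty_c(G)$.

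\emph{The local term.} By the Theorem of Section~3, $T:L^1\times L^1\to L^{1/2,\infty}$. Since $\delta<1/2$, Kolmogorov's inequality gives
\[
\Bigl(\frac{1}{|I|}\int_I|T(f_1^0,f_2^0)|^\delta\Bigr)^{1/\delta}\le C|I|^{-2}\|f_1^0\|_1\|f_2^0\|_1\le C\,M f_1(x)\,M f_2(x).
\]
Here we used $|I^*|\le B|I|$. This is the step where $\delta<1/2$ is essential.

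\emph{The nonlocal terms.} For $z\in I$ and $(\alpha,\beta)\ne(0,0)$, apply the smoothness estimate \eqref{eq:smooth1} to bound $|T(f_1^\alpha,f_2^\beta)(z)-T(f_1^\alpha,f_2^\beta)(x)|$. This needs $d(z,x)\le|I|\le\tfrac12\max\{d(x,y_1),d(x,y_2)\}$ whenever some $y_j\notin I^*$. The ultrametric gives $d(x,y_j)\ge|I^*|\ge 2|I|$ via \eqref{eq:3}; if needed, enlarge $I^*$ by one more level to secure the factor $\tfrac12$, which \eqref{eq:3} permits.

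The difference is then bounded by
\[
C|I|^\varepsilon\int\!\!\int_{(G\times G)\setminus(I^*\times I^*)}\frac{|f_1(y_1)||f_2(y_2)|}{(d(x,y_1)+d(x,y_2))^{2+\varepsilon}}\,dy_1\,dy_2.
\]
Decompose $(G\times G)\setminus(I^*\times I^*)$ into the annular regions $(x+G_{l-k})^2\setminus(x+G_{l-k+1})^2$, $k\ge 2$. On the $k$-th region the denominator is at least $m_{l-k+1}^{-(2+\varepsilon)}$, and the integral of $|f_1||f_2|$ over $(x+G_{l-k})^2$ is at most $m_{l-k}^{-2}Mf_1(x)Mf_2(x)$.

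Using \eqref{eq:3}, the sum is therefore bounded by
\[
C\,Mf_1(x)Mf_2(x)\sum_{k\ge2}\Bigl(\frac{m_{l-k+1}}{m_l}\Bigr)^{\varepsilon}\le C\,Mf_1(x)Mf_2(x)\sum_k 2^{-k\varepsilon}.
\]
This bound is uniform in $z\in I$, so its $L^\delta$-average over $I$ is controlled by the same quantity.

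\emph{Conclusion.} Combine the two estimates with the quasi-triangle inequality for $(\cdot)^\delta$.

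\emph{Main obstacle.} I expect the main difficulty to be justifying the kernel representation for $L^\infty_c$ functions when only one of $y_1,y_2$ lies outside $I^*$ (the mixed terms), together with checking that the smoothness condition applies there. The ultrametric equalities $d(z,y)=d(x,y)$ for $y\notin I^*$ should make this routine.
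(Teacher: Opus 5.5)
Your proof is correct and shares the paper's skeleton. Both split at $I_{-1}=I^*=x+G_{l-1}$, and both treat $T(f_1^0,f_2^0)$ by Kolmogorov's inequality plus the endpoint bound $L^1\times L^1\to L^{1/2,\infty}$, which is where $\delta<\tfrac12$ is used.

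The real difference is the choice of constant.
\begin{itemize}
\item The paper takes $C_I=T(f_1^\infty,f_2^\infty)(x)$. It bounds the mixed terms $T(f_1^0,f_2^\infty)$ and $T(f_1^\infty,f_2^0)$ directly on $I$ using only the size condition \eqref{eq:size}: it discards $d(y,z_1)$ and sums over the shells $I_{j-1}\setminus I_j$ in $z_2$. It invokes \eqref{eq:smooth1} only for the $(\infty,\infty)$ term.
\item You subtract all three nonlocal terms at $x$. This is legitimate, since in each of them one factor vanishes on the clopen set $I^*\ni x$. You then control all three differences at once by \eqref{eq:smooth1}, summing over the joint shells $(x+G_{l-k})^2\setminus(x+G_{l-k+1})^2$ of $(G\times G)\setminus(I^*\times I^*)$.
\end{itemize}
Your check that $d(x,z)\le|I|\le\tfrac12|I^*|<\tfrac12\,d(x,y_j)$ whenever $y_j\notin I^*$ is exactly what is needed, so no further enlargement of $I^*$ is required.

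What each route buys:
\begin{itemize}
\item The paper's route shows that the mixed terms need no regularity at all.
\item Yours is the uniform Lerner--Ombrosi--P\'erez--Torres--Trujillo-Gonz\'alez computation. It carries over verbatim to $m$-linear operators and to the maximal function $\sup_{I\ni x}\prod_j\frac{1}{|I|}\int_I|f_j|$.
\item Yours also avoids factoring $(d(x,y_1)+d(x,y_2))^{-2-\varepsilon}$ into a product of one-variable terms, as the paper does for $F_4$. There the power of $d(x,y)$ must be split as $\varepsilon/2$ per factor for the step to be valid.
\end{itemize}

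One caveat you share with the paper: the Section~3 theorem gives $L^1\times L^1\to L^{1/2,\infty}$ only under an a priori bound $L^p\times L^q\to L^{r,\infty}$. That bound is not among the lemma's hypotheses and should be assumed explicitly.
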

\begin{proof}
    Let $I=x+G_l$ be a fixed coset. To prove inequality~\eqref{eq:sharp}, it suffices to show that
\begin{equation}\label{eq:goal}
\left(
\frac{1}{|I|}
\int_I
\Big|
|T(f_1,f_2)(y)|^\delta - |C_I|^\delta
\Bigr| \, dy
\right)^{1/\delta}
\leq
C\, M(f_1)(x)\, M(f_2)(x).
\end{equation}
for some constant $C_I$ to be determined.
Using the elementary inequality
\[
\bigl||a|^\delta - |b|^\delta\bigr|
\leq |a-b|^\delta,
\qquad 0<\delta <1,
\]
we see that it is enough to prove
\begin{equation}\label{eq:goal2}
\left(
\frac{1}{|I|}
\int_I
\bigl|
T(f_1,f_2)(y) - C_I
\bigr|^\delta \, dy
\right)^{1/\delta}
\leq
C\, M(f_1)(x)\, M(f_2)(x).
\end{equation}
We decompose
\[
f_i = f_i^0 + f_i^\infty,
\qquad
f_i^0 = f_i \chi_{I_{-1}},
\qquad
f_i^\infty = f_i \chi_{I_{-1}^c},
\quad i=1,2.
\]
We shall see that the choice
\[
C_I = T(f_1^\infty,f_2^\infty)(x)
\]
is appropriate. Then, for any $y \in G$, we write
\begin{align*}
\left(
\frac{1}{|I|}
\int_{I}
\bigl|
T(f_1,f_2)(y) - C_{I}
\bigr|^\delta
\, dy
\right)^{1/\delta} 
&\leq
\left(
\frac{1}{|I|}
\int_{I}
\bigl|
T(f_1^{0},f_2^{0})(y)
\bigr|^\delta
\, dy
\right)^{1/\delta} \\
&\quad +
\left(
\frac{1}{|I|}
\int_{I}
\bigl|
T(f_1^{0},f_2^{\infty})(y)
\bigr|^\delta
\, dy
\right)^{1/\delta} \\
&\quad +
\left(
\frac{1}{|I|}
\int_{I}
\bigl|
T(f_1^{\infty},f_2^{0})(y)
\bigr|^\delta
\, dy
\right)^{1/\delta} \\
&\quad +
\left(
\frac{1}{|I|}
\int_{I}
\bigl|
T(f_1^{\infty},f_2^{\infty})(y)
- T(f_1^{\infty},f_2^{\infty})(x)
\bigr|^\delta
\, dy
\right)^{1/\delta} \\
&=: F_1 + F_2 + F_3 + F_4 .
\end{align*}
To estimate $F_{1}$, we use the following Kolmogorov inequality from \cite{10}:
Let $(X,\mu)$ be a probability measure space and let $0<p<q<\infty$.
Then there exists a constant $C=C_{p,q}>0$ such that for every measurable
function $f$,
\begin{equation}\label{eq:weak-strong}
\|f\|_{L^p(\mu)} \leq C \, \|f\|_{L^{q,\infty}(\mu)}.
\end{equation}
To apply Kolmogorov's inequality, we normalize the measure on $I$. 
Define $d\nu := \frac{1}{|I|}dy$, so that $(I,\nu)$ is a probability space. Then applying Kolmogorov estimate with $p=\delta$ and $q=1/2$, together with the boundedness of $T:L^1(G)\times L^1(G)\rightarrow{L^{1/2,\infty}(G)}$, we get
\begin{align*}
F_1
=
\left(
\frac{1}{|I|}
\int_{I}
\bigl|
T(f_1^{0},f_2^{0})(y)
\bigr|^\delta
\, dy
\right)^{1/\delta} 
&= \|T(f_1^0, f_2^0)\|_{L^\delta(I,d\nu)}\\
&\leq
C \,
\| T(f_1^{0},f_2^{0}) \|_{L^{1/2,\infty}
\left(I, \frac{dy}{|I|}\right)} \\
& =
\frac{C}{|I|^2} \,
\| T(f_1^{0},f_2^{0}) \|_{L^{1/2,\infty}
\left(I, dy\right)} \\
&\le \frac{C}{|I|^2} \|f_1^0\|_{L^1(G)} \|f_2^0\|_{L^1(G)}\\
&\le \frac{C}{|I|^2} \|f_1\|_{L^1(I_{-1})} \|f_2\|_{L^1(I_{-1})}\\
&\leq
C
\prod_{i=1}^2
\frac{1}{|I_{-1}|}
\int_{I_{-1}}
|f_i(z_i)| \, dz
_i\\
&\leq
C
\prod_{i=1}^2
M(f_i)(x).
\end{align*}
The second last inequality above is due to the fact that $|I_{k-1}|\leq B|I_{k}|.$ 
Now we estimate $F_2$. For any $y \in I$, using \eqref{eq:size}, Definition \ref{Definition 2.3} and inequality \eqref{eq:3}, we have
\begin{align*}
|T(f_1^{0},f_2^{\infty})(y)|
&\leq
\int_G \int_G
|K(y,z_1,z_2)|
\, |f_1^{0}(z_1)| \, |f_2^{\infty}(z_2)|
\, dz_1\, dz_2\\
&\leq
C \int_{I_{-1}} \int_{G\setminus I_{-1}}
\frac{|f_1(z_1)|\,|f_2(z_2)|}
{\bigl[d(y,z_1)+d(y,z_2)\bigr]^2}
\, dz_1\, dz_2 \\
&\leq
C \int_{I_{-1}} |f_1(z_1)|\, dz_1
\int_{G\setminus I_{-1}}
\frac{|f_2(z_2)|}{(d(y,z_2)^2}
\, dz_2.
\end{align*}

We decompose $G\setminus I_{-1}$ as
\[
G\setminus I_{-1} = \bigcup_{j=-\infty}^{-1} \bigl(I_{j-1}\setminus I_j\bigr).
\]
Hence,
\begin{align*}
|T(f_1^{0},f_2^{\infty})(y)|
&\leq
C \int_{I_{-1}} |f_1(z_1)|\, dz_1
\sum_{j=-\infty}^{-1}
\int_{I_{j-1}\setminus I_j}
\frac{|f_2(z_2)|}{(d(y,z_2)^2}
\, dz_2 \\
&\leq
C \int_{I_{-1}} |f_1(z_1)|\, dz_1
\sum_{j=-\infty}^{-1}
\frac{1}{(d(y,z_2))^2}
\int_{I_{j-1}\setminus I_j} |f_2(z_2)|\, dz_2.\\
&\leq
C
\frac{|I_{-1}|}{|I_{-1}|}
\int_{I_{-1}} |f_1(z_1)|\, dz_1
\sum_{j=-\infty}^{-1}
\frac{1}{(d(y,z_2))^2}
\int_{I_{j-1}\setminus I_j} |f_2(z_2)|\, dz_2 \\
\end{align*}
Since for any $z_2\in I_{j-1}\setminus I_j,~j\in\mathbb{Z}\cap(-\infty ,0)$ and $y\in I,$ we have $z_2-x\in G_{l+j-1}\setminus G_{l+j}$ and $y-x\in G_{l}.$ As 
$2 m_n \le m_{n+1} \le B m_n, \forall n \in \mathbb{Z}.$ We obtain
\begin{align*}
d(z_2,y)&\geq d(z_2,x)-d(y,x)\\
&=\frac{1}{m_{l+j-1}}-\frac{1}{m_{l}}\\
&\geq \frac{1}{m_{l+j-1}}-\frac{1}{2^{1-j}}\frac{1}{m_{l+j-1}}\\
&\geq \frac{1}{2}\frac{1}{m_{l+j-1}}=\frac{1}{2}|I_{j-1}|.
\end{align*}
Also, since $|f_2(z_2)|\geq 0,$ and $I_{j-1}\setminus I_j \subset I_{j-1}$, we have
\begin{align*}
|T(f_1^{0},f_2^{\infty})(y)|
&\leq
C
\frac{|I_{-1}|}{|I_{-1}|}
\int_{I_{-1}} |f_1(z_1)|\, dz_1
\sum_{j=-\infty}^{-1}
\frac{1}{|I_{j-1}|^2}
\int_{I_{j-1}} |f_2(z_2)|\, dz_2 \\
|T(f_1^{0},f_2^{\infty})(y)|
&\leq
C M(f_1)(x) M(f_2)(x)\sum_{j=-\infty}^{-1}\frac{|I_{-1}|}{|I_{j-1}|}\\
&\leq
C M(f_1)(x) M(f_2)(x)
\end{align*}
Therefore,
\[
F_2
=
\left(
\frac{1}{|I|}
\int_{I}
|T(f_1^{0},f_2^{\infty})(y)|^\delta
\, dy
\right)^{1/\delta}
\leq
C M(f_1)(x) M(f_2)(x).
\]

By the same argument, $F_3$ admits the same estimate:
\[
F_3 \leq C M(f_1)(x) M(f_2)(x).
\]
Next, we estimate $F_4$. For $y \in I=x+G_l$, applying \eqref{eq:3}, \eqref{eq:smooth1},
and Definition \ref{Definition 2.3}, we obtain
\begin{align*}
|T(f_1^{\infty}&,f_2^{\infty})(y) - T(f_1^{\infty},f_2^{\infty})(x)|\\&\leq \int_G \int_G
|K(y,z_1,z_2) - K(x,z_1,z_2)|
\, |f_1^{\infty}(z_1)|\, |f_2^{\infty}(z_2)|
\, dz_1\, dz_2 \\
&\leq
C
\int_{G} \int_{G}
\frac{d(x,y)^\epsilon}
{[d(y,z_1)+d(y,z_2)]^{2+\epsilon}}
\, |f_1^{\infty}(z_1)|\, |f_2^{\infty}(z_2)|
dz_1\, dz_2.\\
&\leq C
\int_{G\setminus I_{-1}} \int_{G\setminus I_{-1}}
\frac{d(x,y)^\epsilon}
{[d(y,z_1)+d(y,z_2)]^{\epsilon}}
\frac{\, |f_1(z_1)|\, |f_2(z_2)|}{[d(y,z_1)+d(y,z_2)]^2}
dz_1\, dz_2.\\
 &\le C \sum_{j_{1}=-\infty}^{-1}
\sum_{j_{2}=-\infty}^{-1}
\left\{\int_{I_{j_{1}-1}\setminus I_{j_{1}}}\int_{I_{j_{2}-1}\setminus I_{j_{2}}}
\frac{d(x,y)^\epsilon}
{[d(y,z_1)+d(y,z_2)]^{\epsilon}}\frac{|f_1(z_1)||f_2(z_2)|}{(d(y,z_1)+d(y,z_2))^2}
\,dz_1\,dz_2\right\}\\
&\le C \prod_{i=1}^{2}
\left(
\sum_{j_{i}=-\infty}^{-1}
\int_{I_{j_{i}-1}\setminus I_{j_{i}}}
\frac{d(x,y)^\epsilon}
{(d(y,z_i))^{\epsilon}}\frac{|f_i(z_i)|}{d(y,z_i)}
\,dz_i\right)\\
&\le C \prod_{i=1}^{2}
\left(
\sum_{j_{i}=-\infty}^{-1}
\frac{(\frac{1}{m_l})^\epsilon}{(\frac{1}{m_{l+j_i-1}})^\epsilon}\frac{1}{|I_{j_{i}-1}|}
\int_{I_{j_{i}-1}}
|f_i(z_i)|\,dz_i 
\right)
\end{align*}
Hence,
\begin{align*}
|T(f_1^{\infty},f_2^{\infty})(y) - T(f_1^{\infty},f_2^{\infty})(x)|
&\leq
C
\prod_{i=1}^2 M(f_i)(x)
\sum_{j_i=-\infty}^{-1} \frac{(\frac{1}{m_l})^\epsilon}{(\frac{1}{m_{l+j_i-1}})^\epsilon}
 \\
&\leq
C M(f_1)(x) M(f_2)(x).
\end{align*}
Therefore,
\begin{align*}
F_4 &\leq C M(f_1)(x) M(f_2)(x).
\end{align*}
Combining the estimates for $F_1$, $F_2$, $F_3$, and $F_4$, the proof is complete.

\end{proof}
\medskip
\begin{theorem} \label{Theorem 1}
Let \(1<p,p_1,p_2<\infty\) and let \(p\) satisfy
$\frac{1}{p}=\frac{1}{p_1}+\frac{1}{p_2}.$
Suppose that \(T\) is a bilinear Calderón--Zygmund operator as defined in \eqref{eq:rep}.
Then there exists a constant \(C>0\) such that
\[
\|T(f_1,f_2)\|_{L^{p}(G)}
\le
C\,\|f_1\|_{L^{p_1}(G)}\,\|f_2\|_{L^{p_2}(G)}
\]
for all \(f_1\in L^{p_1}(G)\) and \(f_2\in L^{p_2}(G)\).
\end{theorem}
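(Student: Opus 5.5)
The plan follows the standard Grafakos--Torres/Pérez--Torres scheme, combining the sharp maximal estimate of Lemma~\ref{Lemma 3.3} with the Fefferman--Stein inequality of Lemma~\ref{Lemma 3.2} and the Hölder-type bound of Lemma~\ref{Lemma 3.1}. Note that Lemma~\ref{Lemma 3.3} itself rests on the endpoint estimate $L^1\times L^1\to L^{1/2,\infty}$ of the previous section. We therefore take as a standing assumption, as in the Euclidean theory, that $T$ is a priori bounded from $L^{q_1}\times L^{q_2}$ into $L^{q,\infty}$ for some exponents with $\frac1q=\frac1{q_1}+\frac1{q_2}$, which is implicit in the notion of a bilinear Calderón--Zygmund operator. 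We first work with $f_1,f_2\in L^\infty_c(G)$, a dense class in $L^{p_1}$ and $L^{p_2}$, and extend by density at the end.

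Fix $0<\delta<\frac12$. Since $p>\frac12>\delta$, we have $p/\delta>1$, and the pointwise inequality $|h|\le M_\delta h$ holds a.e. by the Lebesgue differentiation theorem on $G$, since cosets shrink nicely. This gives
\[
\|T(f_1,f_2)\|_{L^p(G)}\le \|M_\delta(T(f_1,f_2))\|_{L^p(G)}=\big\|M(|T(f_1,f_2)|^\delta)\big\|_{L^{p/\delta}(G)}^{1/\delta}.
\]
Applying Lemma~\ref{Lemma 3.2} in its $M_\delta$, $M^\#_\delta$ form then bounds this by $C\|M^\#_\delta(T(f_1,f_2))\|_{L^p(G)}$. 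Lemma~\ref{Lemma 3.3} bounds this in turn by $C\|M f_1\cdot Mf_2\|_{L^p(G)}$. By Lemma~\ref{Lemma 3.1} and the $L^{p_i}$-boundedness of $M$ for $p_i>1$, this is at most $C\|f_1\|_{L^{p_1}}\|f_2\|_{L^{p_2}}$. One caveat is that Lemma~\ref{Lemma 3.1} is stated with $p>1$. When $p\le 1$ I would simply use Hölder's inequality with exponents $p_1/p$ and $p_2/p$, which works for all $0<p$.

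The main obstacle is the a priori finiteness required in Lemma~\ref{Lemma 3.2}: the Fefferman--Stein inequality requires $M_\delta(T(f_1,f_2))\in L^{p}$, or at least $\|M_\delta T(f_1,f_2)\|_{L^{p}}<\infty$, before it can be applied. I would handle this as follows. For $f_i\in L^\infty_c$, the size condition \eqref{eq:size} gives $|T(f_1,f_2)(y)|\le C\,d(y,x_0)^{-2}$ far from the supports. Near the supports we use the weak-type bound, so $T(f_1,f_2)\in L^{q,\infty}$ locally. Hence $|T(f_1,f_2)|^\delta$ is locally integrable and decays like $d(y,x_0)^{-2\delta}$. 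Choosing $\delta$ small enough that $2\delta p>1$ fails is not what we want; instead choose $\delta<p/2$ but with $2p>1$. Then $|T(f_1,f_2)|$ lies in $L^p$ away from the supports, since $\int_{d>R}d^{-2p}<\infty$ by \eqref{eq:4} whenever $2p>1$, which holds because $p>1/2$. It follows that $M_\delta(T(f_1,f_2))$, which is at most a constant times $d^{-2}$ at infinity and locally is controlled by the weak-type bound, lies in $L^p$. If this route is too delicate, I would instead run the Fefferman--Stein argument in its good-$\lambda$ form, which only needs $|\{M_\delta T(f_1,f_2)>\lambda\}|<\infty$ for each $\lambda$. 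That follows directly from the weak-type $(1,1)$ bound for $M$ applied to $|T|^\delta$ restricted to large cosets, together with the decay above.

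Finally, since the bound holds uniformly on the dense class $L^\infty_c\times L^\infty_c$ and $T$ is bilinear, $T$ extends uniquely to a bounded bilinear map $L^{p_1}(G)\times L^{p_2}(G)\to L^p(G)$. When $p<1$ this uses the $p$-triangle inequality in $L^p$ to pass to limits of Cauchy sequences.
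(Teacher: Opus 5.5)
Your proposal is essentially the paper's argument. For $f_1,f_2\in L^\infty_c(G)$ you chain $|T(f_1,f_2)|\le M_\delta T(f_1,f_2)$, Lemma~\ref{Lemma 3.2}, Lemma~\ref{Lemma 3.3}, Lemma~\ref{Lemma 3.1} and the $L^{p_i}$-boundedness of $M$, then extend by density. You also make explicit the a priori weak-type hypothesis on which the endpoint bound inside Lemma~\ref{Lemma 3.3} rests, and you raise the finiteness needed for Fefferman--Stein; the paper passes over both, invoking Lemma~\ref{Lemma 3.2} for $T(f_1,f_2)$ without knowing that this function lies in $L^p(G)$.

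Of your two ways of settling finiteness, only the good-$\lambda$ one works in general. The bound $T(f_1,f_2)\in L^{q,\infty}(G)$ gives $T(f_1,f_2)\in L^s_{\mathrm{loc}}$ only for $s<q$, and $q$ can be as small as $\tfrac12$, so it does not give local $L^p$ integrability for $p>1$. The good-$\lambda$ argument, by contrast, needs only $|\{M_\delta T(f_1,f_2)>\lambda\}|<\infty$ for every $\lambda>0$. It also needs your decay bound $M_\delta T(f_1,f_2)\lesssim d(\cdot,x_0)^{-2}$, which keeps $\int_0^N \lambda^{p-1}|\{M_\delta T(f_1,f_2)>\lambda\}|\,d\lambda$ finite for $p>\tfrac12$.
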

\begin{proof}
Let $f_1, f_2 \in L_c^\infty(G)$. Using Lemmas \ref{Lemma 3.1},\ref{Lemma 3.2} together with inequality \eqref{eq:sharp} and the boundedness of $M$ on $L^p(G)$. We obtain
\begin{align*}
 \|T(f_1,f_2)\|_{L^p(G)}& \le C\|M^{\#}_\delta (T(f_1,f_2)) \|_{L^p(G)}\\
  &\le  C\, \|M(f_1)(x)\, M(f_2)(x)\|_{L^p(G)}\\
  & \le
C\,\|M(f_1)\|_{L^{p_1}(G)}\,
\|M(f_2)\|_{L^{p_2}(G)}\\
&\le
C\,\|f_1\|_{L^{p_1}(G)}\,
\|f_2\|_{L^{p_2}(G)}.
\end{align*}
Since $L^{\infty}_c(G)$ is dense in $L^{p_i}(G), i=1,2,~for \quad 1<p_i<\infty$, for any $f_1 \in L^{p_1}(G)$ and $f_2 \in L^{p_2}(G)$ there exist sequences 
$\{f_1^k\}$ and $\{f_2^k\}$ in $L_c^\infty(G)$ such that
\[
\|f_1^k - f_1\|_{L^{p_1}(G)} \to 0 
\quad \text{and} \quad
\|f_2^k - f_2\|_{L^{p_2}(G)} \to 0 .
\]

We show $T_k = \{T(f_1^k,f_2^k)\}$ is Cauchy in $L^p(G)$. Indeed,
\begin{align*}
\|T_k - T_\ell\|_{L^p(G)}
&= \|T(f_1^k,f_2^k) - T(f_1^\ell,f_2^\ell)\|_{L^p(G)}\\
&= \|T(f_1^k-f_1^\ell,f_2^k) + T(f_1^\ell,f_2^k-f_2^\ell)\|_{L^p(G)}.  
\end{align*}
Using the same argument as above, we have
\[\|T_k - T_\ell\|_{L^p(G)}
\le C\left(
\|f_1^k-f_1^\ell\|_{L^{p_1}(G)}\|f_2^k\|_{L^{p_2}(G)}
+
\|f_2^k-f_2^\ell\|_{L^{p_2}(G)}\|f_1^\ell\|_{L^{p_1}(G)}
\right).
\]

Since $\{f_1^k\}$ and $\{f_2^k\}$ are convergent, they are bounded. Hence
\[
\sup_k \|f_2^k\|_{L^{p_2}(G)} < \infty,
\qquad
\sup_\ell \|f_1^\ell\|_{L^{p_1}(G)} < \infty.
\]

As $k,\ell \to \infty$, 
$\|f_1^k - f_1^\ell\|_{L^{p_1}(G)} \to 0
\quad \text{and} \quad
\|f_2^k - f_2^\ell\|_{L^{p_2}(G)} \to 0.
$

Consequently,
$$
\|T_k - T_\ell\|_{L^p(G)} \to 0,$$
that is,
$$\|T(f_1^k,f_2^k) - T(f_1^\ell,f_2^\ell)\|_{L^p(G)} \to 0.
$$

Thus $T_k = \{T(f_1^k,f_2^k)\}$ is a Cauchy sequence in $L^p(G)$. Since $L^p(G)$ is complete for $p>1$, the sequence converges to some limit in $L^p(G)$. We denote this limit by $T(f_1,f_2).$ For each $k$, we have $$\|T(f_1^k,f_2^k)\|_{L^p(G)} \leq C\|f_1^k\|_{L^{p_1}(G)}\|f_2^k\|_{L^{p_2}(G)}.$$ 
Letting $k\to\infty$, we obtain the desired result.
\end{proof}

\section{Boundedness on the Product of Morrey Spaces}
In this section, we establish the boundedness of bilinear Calder\'on--Zygmund operators on products of Morrey spaces. The argument is based on the corresponding boundedness result on product of Lebesgue spaces.
\begin{theorem} \label{Theorem 2}
Let \(T\) be a bilinear Calder\'{o}n-Zygmund operator as defined in \eqref{eq:rep} and \(u_1,u_2 : \mathcal{I} \to (0,\infty)\). Suppose that \(1<p,p_1,p_2<\infty\) satisfy $\frac{1}{p}=\frac{1}{p_1}+\frac{1}{p_2},$
and that the measure \(dx\) satisfies the condition \eqref{eq:3}  and
\begin{align} \label{Equation 4.16}
    u_i(I_{j-1}) &\le C\, u_i(I_j), \qquad j\in\mathbb{Z}.
\end{align}

If there exists a constant \(C>0\) such that for all \(I\in\mathcal{I}\),
\begin{align} \label{Equation 4.17}
   \sum_{j=-\infty}^{-1}
\frac{|I|^{1/p_i}}{|I_{j-1}|^{1/p_i}}
\,u_i(I_{j-1})
&\le C\,u_i(I),
\qquad i=1,2, 
\end{align}
and \(u=u_1u_2\), then
\[
\|T(f_1,f_2)\|_{\mathcal{M}_{p,u}(G)} 
\le
C\,\|f_1\|_{\mathcal{M}_{p_1,u_1}(G)}\,
\|f_2\|_{\mathcal{M}_{p_2,u_2}(G)},
\]
where
\[
f_i \in \mathcal{M}_{p_i,u_i}(G), \qquad i=1,2.
\]
\end{theorem}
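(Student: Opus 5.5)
Fix a coset $I=x+G_l\in\mathcal{I}$. We need
\[
\|\chi_I T(f_1,f_2)\|_{L^p(G)}\le C\,u_1(I)u_2(I)\,\|f_1\|_{\mathcal{M}_{p_1,u_1}(G)}\|f_2\|_{\mathcal{M}_{p_2,u_2}(G)} .
\]
The plan is the standard local/global splitting used in Lemma~\ref{Lemma 3.3}. Write $f_i=f_i^0+f_i^\infty$ with $f_i^0=f_i\chi_{I_{-1}}$ and $f_i^\infty=f_i\chi_{G\setminus I_{-1}}$. Then
\[
T(f_1,f_2)=T(f_1^0,f_2^0)+T(f_1^0,f_2^\infty)+T(f_1^\infty,f_2^0)+T(f_1^\infty,f_2^\infty),
\]
and we estimate the $L^p(I)$ norm of each piece. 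Since $p>1$, Minkowski's inequality applies, and the four contributions can simply be added.

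\emph{Local term.} By Theorem~\ref{Theorem 1},
\[
\|\chi_I T(f_1^0,f_2^0)\|_{L^p}\le C\|f_1\chi_{I_{-1}}\|_{L^{p_1}}\|f_2\chi_{I_{-1}}\|_{L^{p_2}}\le C\,u_1(I_{-1})u_2(I_{-1})\|f_1\|_{\mathcal{M}_{p_1,u_1}}\|f_2\|_{\mathcal{M}_{p_2,u_2}} .
\]
Condition \eqref{Equation 4.16} with $j=0$ gives $u_i(I_{-1})\le Cu_i(I)$, which finishes this term.

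\emph{Nonlocal terms.} These are handled pointwise, as in the estimates for $F_2$ and $F_4$. For $y\in I$ and $z\notin I_{-1}$ with $z\in I_{j-1}\setminus I_j$, the ultrametric property gives $d(y,z)=|I_{j-1}|$. Using the size condition \eqref{eq:size} and $(d(y,z_1)+d(y,z_2))^2\ge d(y,z_1)d(y,z_2)$, the term $T(f_1^\infty,f_2^\infty)$ splits into a product of two one-variable sums. For the mixed term $T(f_1^0,f_2^\infty)$ we instead use $(d(y,z_1)+d(y,z_2))^{-2}\le d(y,z_2)^{-2}$ with $d(y,z_2)\ge |I_{-2}|$. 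Applying H\"older on each annulus,
\[
\frac{1}{|I_{j-1}|}\int_{I_{j-1}}|f_i|\le |I_{j-1}|^{-1/p_i}\,u_i(I_{j-1})\,\|f_i\|_{\mathcal{M}_{p_i,u_i}} ,
\]
we obtain for every $y\in I$
\[
|T(f_1^\infty,f_2^\infty)(y)|\le C\prod_{i=1}^2\Big(\sum_{j\le -1}|I_{j-1}|^{-1/p_i}u_i(I_{j-1})\Big)\|f_i\|_{\mathcal{M}_{p_i,u_i}} .
\]
Hypothesis \eqref{Equation 4.17} bounds each sum by $C|I|^{-1/p_i}u_i(I)$. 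Taking the $L^p(I)$ norm multiplies by $|I|^{1/p}=|I|^{1/p_1}|I|^{1/p_2}$, which gives exactly $Cu_1(I)u_2(I)$.

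For $T(f_1^0,f_2^\infty)$ we get
\[
|T(f_1^0,f_2^\infty)(y)|\le C\,\|f_1\chi_{I_{-1}}\|_{L^1}\sum_{j\le -1}|I_{j-1}|^{-2}\int_{I_{j-1}}|f_2| .
\]
The first factor is at most $|I_{-1}|^{1-1/p_1}u_1(I_{-1})\|f_1\|_{\mathcal{M}_{p_1,u_1}}$, and the $j$-th summand is at most $|I_{j-1}|^{-1-1/p_2}u_2(I_{j-1})\|f_2\|_{\mathcal{M}_{p_2,u_2}}$. Since $j\le -1$, we have $|I_{j-1}|\ge|I_{-1}|$, so $|I_{-1}|/|I_{j-1}|\le 1$. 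Combining the factors then yields the bound
\[
C\,|I_{-1}|^{-1/p_1}u_1(I_{-1})\sum_{j}|I_{j-1}|^{-1/p_2}u_2(I_{j-1}) .
\]
Now \eqref{Equation 4.16}, \eqref{Equation 4.17} and $|I_{-1}|\ge |I|$ bound this by $C|I|^{-1/p}u_1(I)u_2(I)$, and integrating over $I$ finishes. The term $T(f_1^\infty,f_2^0)$ is symmetric.

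A technical point is that $f_i$ lies only in a Morrey space, so $T(f_1,f_2)$ must first be given a meaning. I would define it on $I$ as the sum of the four pieces. The local piece is defined through Theorem~\ref{Theorem 1}, since $f_i\chi_{I_{-1}}\in L^{p_i}$. The other three are defined by the absolutely convergent kernel integrals, and the estimates above show their convergence. Consistency across different cosets follows because the definitions agree on $L^\infty_c$.

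The main obstacle I expect is the mixed term. There the kernel does not factor, and one must check carefully that the powers of $|I_{-1}|$ and $|I_{j-1}|$ combine, via $|I_{j-1}|\ge|I_{-1}|\ge|I|$ and \eqref{Equation 4.16}, into exactly the form that \eqref{Equation 4.17} controls. The off-diagonal term is straightforward once the product-splitting of the kernel is in place.
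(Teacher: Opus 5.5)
Your proposal is correct and follows essentially the same route as the paper. Both use the splitting $f_i=f_i\chi_{I_{-1}}+f_i\chi_{G\setminus I_{-1}}$ and Theorem~\ref{Theorem 1} for the local piece. For the remaining pieces both use the size bound \eqref{eq:size}, with the product splitting $(d(y,z_1)+d(y,z_2))^2\ge d(y,z_1)\,d(y,z_2)$ in the $(\infty,\infty)$ term, then H\"older on the annuli $I_{j-1}\setminus I_j$ and finally \eqref{Equation 4.16}--\eqref{Equation 4.17}.

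Your write-up is slightly tidier than the paper's in three places. You use the exact ultrametric distance $d(y,z)=|I_{j-1}|$; you correctly use $I_{-1}$ together with \eqref{Equation 4.16} in the local term; and you remark explicitly on how $T(f_1,f_2)$ should be defined for Morrey data, which the paper leaves implicit.
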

\begin{proof}
Let $I=x+G_l$ be a fixed coset. Decompose \(f_i\) as  \[f_i=f_i^{0}+f_i^{\infty}, \qquad i=1,2,\] where $f_i^{0}=f_i\chi_{I_{-1}}~\text{and}~ f_i^{\infty}=f_i\chi_{G\setminus I_{-1}.}$ Then, using \eqref{eq:def} and the triangle inequality, we write 
\begin{align*}
\sup_{I\in\mathcal{I}} \frac{1}{u(I)}\|\chi_I T(f_1,f_2)\|_{L^{p}(G)}
&\le \sup_{I\in\mathcal{I}} \frac{1}{u(I)}\|\chi_I T(f_1^{0},f_2^{0})\|_{L^{p}(G)}\\
&+\sup_{I\in\mathcal{I}} \frac{1}{u(I)}\|\chi_I T(f_1^{0},f_2^{\infty})\|_{L^{p}(G)}\\
&+\sup_{I\in\mathcal{I}} \frac{1}{u(I)}\|\chi_I T(f_1^{\infty},f_2^{0})\|_{L^{p}(G)}\\
&+\sup_{I\in\mathcal{I}} \frac{1}{u(I)}\|\chi_I T(f_1^{\infty},f_2^{\infty})\|_{L^{p}(G)}\\
&=:E_1+E_2+E_3+E_4
\end{align*}
From \eqref{eq:def}, Theorem \ref{Theorem 1}, and the identity $u(I)=u_1(I)u_2(I)$,
\begin{align*}
E_1&=\sup_{I\in\mathcal{I}} \frac{1}{u(I)}\|\chi_I T(f_1^{0},f_2^{0})\|_{L^{p}(G)}\\
&\le C\sup_{I\in\mathcal{I}} \frac{1}{u(I)}\|\chi_{I}f_1\|_{L^{p_1}(G)}\|\chi_{I}f_2\|_{L^{p_2}(G)}\\
&\le C\sup_{I\in\mathcal{I}}\frac{1}{u(I)}u_1(I)\|f_1\|_{\mathcal{M}_{p_1,u_1}(G)}u_2(I)\|f_2\|_{\mathcal{M}_{p_2,u_2}(G)}\\
&\le C\|f_1\|_{\mathcal{M}_{p_1,u_1}(G)}\|f_2\|_{\mathcal{M}_{p_2,u_2}(G)}
\end{align*}
To estimate \(E_2\),  we first estimate $|T(f_1^{0},f_2^{\infty})(y)|$. Let \(y\in I\),
by Definition \ref{Definition 2.4}, we obtain
\begin{align*}
|T(f_1^{0},f_2^{\infty})(y)|
&\le  C \int_G\int_G\frac{|f_1^{0}(z_1)||f_2^{\infty}(z_2)|}{(d(y,z_1)+d(y,z_2))^{2}}\,dz_1dz_2\\
&\le  C\int_{I_{-1}}\int_{G\setminus {I_{-1}}}\frac{|f_1(z_1)||f_2(z_2)|}{(d(y,z_1)+d(y,z_2))^{2}}\,dz_1dz_2\\
&\le  C\sum_{j=-\infty}^{-1}\int_{I_{-1}}\int_{I_{j-1}\setminus I_j}\frac{|f_1(z_1)||f_2(z_2)|}
{(d(y,z_1)+d(y,z_2))^{2}}\,dz_1dz_2\\
&\le  C\int_{I_{-1}}|f_1(z_1)|\,dz_1\sum_{j=-\infty}^{-1}\int_{I_{j-1}\setminus I_j}\frac{|f_2(z_2)|}{(d(y,z_2))^{2}}\,dz_2.
\end{align*}
Now applying Hölder's Inequality with $\frac{1}{p_1} + \frac{1}{p_1'} = 1$ and $\frac{1}{p_2} + \frac{1}{p_2'} = 1$ we get
\begin{align*}
|T(f_1^{0},f_2^{\infty})(y)|
&\le  C \|\chi_{I_{-1}} f_1\|_{L^{p_1}(G)} \|\chi_{I_{-1}}\|_{L^{p_1'}(G)}
\sum_{j=-\infty}^{-1}
\frac{1}{(d(y,z_2))^2}
\|\chi_{I_{j-1}} f_2\|_{L^{p_2}(G)}\|\chi_{I_{j-1}}\|_{L^{p_2'}(G)}\\
&\le  C \|\chi_{I_{-1}} f_1\|_{L^{p_1}(G)}
\frac{u_1(I_{-1})}{u_1(I_{-1})}
\|\chi_{I_{-1}}\|_{L^{p_1'}(G)}
\sum_{j=-\infty}^{-1}
\frac{1}{(d(y,z_2))^2}
\frac{u_2(I_{j-1})}{u_2(I_{j-1})}
\|\chi_{I_{j-1}} f_2\|_{L^{p_2}(G)}
\|\chi_{I_{j-1}}\|_{L^{p_2'}(G)}\\
&\le  C \|f_1\|_{\mathcal{M}_{p_1,u_1}(G)}
\|f_2\|_{\mathcal{M}_{p_2,u_2}(G)}
u_1(I_{-1})
\|\chi_{I_{-1}}\|_{L^{p_1'}(G)}
\sum_{j=-\infty}^{-1}
\frac{1}{(d(y,z_2))^2}
u_2(I_{j-1})
\|\chi_{I_{j-1}}\|_{L^{p_2'}(G)}\\
&\le  C \|f_1\|_{\mathcal{M}_{p_1,u_1}(G)}
\|f_2\|_{\mathcal{M}_{p_2,u_2}(G)} \frac{1}{|I_{-1}|}\|\chi_{I_{-1}}\|_{L^{p_1'}(G)}\|\chi_{I_{-1}}\|_{L^{p_1}(G)}u_1(I_{-1})
\frac{|I_{-1}|}{\|\chi_{I_{-1}}\|_{L^{p_1}(G)}}\\
&\times
\sum_{j=-\infty}^{-1}
\frac{1}{d(y,z_2)}
\|\chi_{I_{j-1}}\|_{L^{p_2'}(G)}\|\chi_{I_{j-1}}\|_{L^{p_2}(G)}\frac{1}{\|\chi_{I_{j-1}}\|_{L^{p_2}(G)}}\frac{u_2(I_{j-1})}{d(y,z_2)}
\end{align*}
Since $z_2\in I_{j-1}\setminus I_j,~j\in\mathbb{Z}\cap (-\infty,0)$ and $y\in I,$ we have $z_2-x\in G_{l+j-1}\setminus G_{l+j}$ and $y-x\in G_{l}.$ Using the condition  $2 m_n \le m_{n+1} \le B m_n, \forall n \in \mathbb{Z}.$ We obtain
\begin{align*}
d(z_2,y)&\geq d(z_2,x)-d(y,x)\\
&=\frac{1}{m_{l+j-1}}-\frac{1}{m_{l}}\\
&\geq \frac{1}{m_{l+j-1}}-\frac{1}{2^{1-j}}\frac{1}{m_{l+j-1}}\\
&\geq \frac{1}{2}\frac{1}{m_{l+j-1}}=\frac{1}{2}|I_{j-1}|.
\end{align*}
So we have,
\begin{align*}
|T(f_1^{0},f_2^{\infty})(y)|
&\le  C \|f_1\|_{\mathcal{M}_{p_1,u_1}(G)}
\|f_2\|_{\mathcal{M}_{p_2,u_2}(G)}
\frac{1}{|I_{-1}|}\|\chi_{I_{-1}}\|_{L^{p_1'}(G)}\|\chi_{I_{-1}}\|_{L^{p_1}(G)}u_1(I_{-1})
\frac{|I_{-1}|}{\|\chi_{I_{-1}}\|_{L^{p_1}(G)}}\\
&\times
\sum_{j=-\infty}^{-1}
\frac{1}{|I_{j-1}|}
\|\chi_{I_{j-1}}\|_{L^{p_2'}(G)}\|\chi_{I_{j-1}}\|_{L^{p_2}(G)}\frac{1}{\|\chi_{I_{j-1}}\|_{L^{p_2}(G)}}\frac{u_2(I_{j-1})}{|I_{j-1}|}\\
&\le C \|f_1\|_{\mathcal{M}_{p_1,u_1}(G)}
\|f_2\|_{\mathcal{M}_{p_2,u_2}(G)}
\frac{u_1(I_{-1})|I_{-1}|}{\|\chi_{I_{-1}}\|_{L^{p_1}(G)}}
\left\{
\sum_{j=-\infty}^{-1}
\frac{1}{|I_{j-1}|}
\frac{u_2(I_{j-1})}{\|\chi_{I_{j-1}}\|_{L^{p_2}(G)}}
\right\}.
\end{align*}
The last inequality above is due to the fact that $\frac{1}{|I_{-1}|}\|\chi_{I_{-1}}\|_{L^{p_i'}(G)}\|\chi_{I_{-1}}\|_{L^{p_i}(G)} = 1, ~ i=1,2$. Now,
\medskip
\[\|\chi_I T(f_1^{0},f_2^{\infty})\|_{L^{p}(G)}
\le
C\|f_1\|_{\mathcal{M}_{p_1,u_1}(G)}
\|f_2\|_{\mathcal{M}_{p_2,u_2}(G)}\|\chi_{I}\|_{{L^p}(G)}\frac{u_1(I_{-1})|I_{-1}|}{\|\chi_{I_{-1}}\|_{L^{p_1}(G)}}
\left\{ \sum_{j=-\infty}^{-1}
\frac{1}{|I_{j-1}|}
\frac{u_2(I_{j-1})}{\|\chi_{I_{j-1}}\|_{L^{p_2}(G)}}\right\}\]
Multiplying both sides by $\frac{1}{u(I)}$ and taking supremum over \(I\in \mathcal{I}\), using \ref{Equation 4.16}, \ref{Equation 4.17} and Lemma \ref{Lemma 3.1}, we get
\begin{align*}
E_2 &=
\sup_{I\in\mathcal{I}} \frac{1}{u(I)}
\|\chi_I T(f_1^{0},f_2^{\infty})\|_{L^p}(G)\\
&\le
C\|f_1\|_{\mathcal{M}_{p_1,u_1}(G)}
\|f_2\|_{\mathcal{M}_{p_2,u_2}(G)}\sup_{I\in\mathcal{I}} \frac{1}{u(I)}\|\chi_{I}\|_{{L^p}(G)}\frac{u_1(I_{-1})|I_{-1}|}{\|\chi_{I_{-1}}\|_{L^{p_1}(G)}}
\left\{ \sum_{j=-\infty}^{-1}
\frac{1}{|I_{j-1}|}
\frac{u_2(I_{j-1})}{\|\chi_{I_{j-1}}\|_{L^{p_2}(G)}}
\right\}\\
 &\le
C\|f_1\|_{\mathcal{M}_{p_1,u_1}(G)}
\|f_2\|_{\mathcal{M}_{p_2,u_2}(G)}\sup_{I\in\mathcal{I}} \frac{1}{u(I)}\|\chi_{I}\|_{{L^{p_2}}(G)}u_1(I_{-1})|I_{-1}|
\left\{ \sum_{j=-\infty}^{-1}
\frac{1}{|I_{j-1}|}
\frac{u_2(I_{j-1})}{\|\chi_{I_{j-1}}\|_{L^{p_2}(G)}}
\right\}\\ 
&\le
C\|f_1\|_{\mathcal{M}_{p_1,u_1}(G)}
\|f_2\|_{\mathcal{M}_{p_2,u_2}(G)}\sup_{I\in\mathcal{I}} \frac{u_1(I_{-1})}{u(I)}
\left\{ \sum_{j=-\infty}^{-1}\frac{\|\chi_{I}\|_{{L^{p_2}}(G)}}{\|\chi_{I_{j-1}}\|_{L^{p_2}(G)}}u_2(I_{j-1})\frac{|I_{-1}|}{|I_{j-1}|}
\right\}\\
&\le
C\|f_1\|_{\mathcal{M}_{p_1,u_1}(G)}
\|f_2\|_{\mathcal{M}_{p_2,u_2}(G)}\sup_{I\in\mathcal{I}} \frac{u_1(I)u_{2}(I)}{u(I)}\sum_{j=-\infty}^{-1}\frac{|I_{-1}|}{|I_{j-1}|}\\
&\le
C\|f_1\|_{\mathcal{M}_{p_1,u_1}(G)}
\|f_2\|_{\mathcal{M}_{p_2,u_2}(G)}
\end{align*}
Similarly, \(E_3\) admits the same estimate. To estimate \(E_4\), we first observe that
\begin{align*}
|T(f_1^{\infty},f_2^{\infty})(y)|
&\le
C\int_G\int_G
\frac{|f_1^{\infty}(z_1)||f_2^{\infty}(z_2)|}
{(d(y,z_1)+d(y,z_2))^{2}}
\,dz_1dz_2\\
&\le C \int_{G\setminus I_{-1}}\int_{G\setminus I_{-1}}
\frac{|f_1(z_1)||f_2(z_2)|}{(d(y,z_1)+d(y,z_2))^2}
\,dz_1\,dz_2\\
& \le C \sum_{j_{1}=-\infty}^{-1}
\sum_{j_{2}=-\infty}^{-1}
\left\{\int_{I_{j_{1}-1}\setminus I_{j_{1}}}\int_{I_{j_{2}-1}\setminus I_{j_{2}}}
\frac{|f_1(z_1)||f_2(z_2)|}{(d(y,z_1)+d(y,z_2))^2}
\,dz_1\,dz_2\right\}\\
&\le C \prod_{i=1}^{2}
\left(
\sum_{j_{i}=-\infty}^{-1}
\int_{I_{j_{i}-1}\setminus I_{j_{i}}}
\frac{|f_i(z_i)|}{d(y,z_i)}
\,dz_i\right)\\
&\le C \prod_{i=1}^{2}
\left(
\sum_{j_{i}=-\infty}^{-1}
\frac{1}{|I_{j_{i}-1}|}
\int_{I_{j_{i}-1}}
|f_i(z_i)|\,dz_i
\right)\\
&\le C \prod_{i=1}^{2}
\left(
\sum_{j_i=-\infty}^{-1}
\frac{1}{|I_{j_{i}-1}|}
\|\chi_{I_{j_{i}-1}} f_i\|_{L^{p_i}(G)}
\|\chi_{I_{j_{i}-1}}\|_{L^{p_i'}(G)}
\right)\\
&\le C \prod_{i=1}^{2}
\left(
\sum_{j_i=-\infty}^{-1}
\frac{1}{|I_{j_{i}-1}|}
\|\chi_{I_{j_{i}-1}} f_i\|_{L^{p_i}(G)}
\frac{u_i(I_{j_{i}-1})}{u_i(I_{j_{i}-1})}\frac{\|\chi_{I_{j_{i}-1}}\|_{L^{p_i}(G)}}{\|\chi_{I_{j_{i}-1}}\|_{L^{p_i}(G)}}
\|\chi_{I_{j_{i}-1}}\|_{L^{p_i'}(G)}
\right)\\
&\le C \|f_1\|_{\mathcal{M}_{p_1,u_1}(G)}
\|f_2\|_{\mathcal{M}_{p_2,u_2}(G)}
\prod_{i=1}^{2}
\left(
\sum_{j_{i}=-\infty}^{-1}
\frac{u_i(I_{j_{i}-1})}{\|\chi_{I_{j_{i}-1}}\|_{L^{p_i}(G)}}
\right).
\end{align*}
The above inequality again follows from the fact that  $\frac{1}{|I_{ji-1}|}\|\chi_{I_{ji-1}}\|_{L^{p_i'}(G)}\|\chi_{I_{ji-1}}\|_{L^{p_i}(G)} = 1, ~ i=1,2$. Now, 
\[\|\chi_I T(f_1^{\infty},f_2^{\infty})\|_{L^{p}(G)}
\le
C \|f_1\|_{\mathcal{M}_{p_1,u_1}(G)}
\|f_2\|_{\mathcal{M}_{p_2,u_2}(G)}\|\chi_I\|_{{L^p}(G)}
\prod_{i=1}^{2}
\left(
\sum_{j_i=-\infty}^{-1}
\frac{u_i(I_{j_{i}-1})}{\|\chi_{I_{j_{i}-1}}\|_{L^{p_i}(G)}}
\right).\]
Multiplying both sides by $\frac{1}{u(I)}$ and Taking supremum over \(I\in\mathcal{I}\), using Lemma \ref{Lemma 3.1} and condition \ref{Equation 4.17}, we get
\begin{align*}
E_{4}&=\sup_{I\in\mathcal{I}}
\frac{1}{u(I)}
\|\chi_I T(f_1^{\infty},f_2^{\infty})\|_{L^{p}(G)}\\
&\le
C \|f_1\|_{\mathcal{M}_{p_1,u_1}(G)}
\|f_2\|_{\mathcal{M}_{p_2,u_2}(G)}
\sup_{I\in\mathcal{I}}\frac{1}{u(I)}\|\chi_I\|_{{L^p}(G)}
\prod_{i=1}^{2}
\left(
\sum_{j_i=-\infty}^{-1}
\frac{u_i(I_{j_i-1})}{\|\chi_{I_{j_{i}-1}}\|_{L^{p_i}(G)}}
\right)\\
&\le
C \|f_1\|_{\mathcal{M}_{p_1,u_1}(G)}
\|f_2\|_{\mathcal{M}_{p_2,u_2}(G)}
\sup_{I\in\mathcal{I}}\frac{1}{u(I)}
\prod_{i=1}^{2}
\left(
\sum_{j_i=-\infty}^{-1}
\frac{\|\chi_I\|_{{L^pi}(G)}u_i(I_{j_i-1})}{\|\chi_{I_{j_{i}-1}}\|_{L^{p_i}(G)}}
\right)\\
&\le C \|f_1\|_{M_{p_1,u_1}(G)}
\|f_2\|_{M_{p_2,u_2}(G)}
\sup_{I\in\mathcal{I}}
\frac{u_1(I)u_2(I)}{u(I)}\\
&\le C \|f_1\|_{M_{p_1,u_1}(G)}
\|f_2\|_{M_{p_2,u_2}(G)}\\
\end{align*}
Combining the estimates for \(E_1,E_2,E_3\) and \(E_4\), we obtain the desired result.
\end{proof}

\section{Declaration} 
The first author expresses gratitude for the financial assistance provided by the DST INSPIRE fellowship (file no. DST/INSPIRE/03/2023/001183, Code No. IF220246). The second and third authors do not have funding.\\
Consent to Participate declaration: Not applicable.\\
Consent to Publish declaration: Not applicable.\\
Ethics declaration: Not applicable.


\begin{thebibliography}{99}    
\bibitem{38} Behera, B., Jahan, Q.: Wavelet analysis on local fields of positive characteristic. Springer, Singapore (2021).

\bibitem{22}Christ, M., Journé, J.L.: Polynomial growth estimates for multilinear singular integral operators. Acta Math. \textbf{159}, 51–80 (1987). 

\bibitem{21} Coifman, R.R., Meyer, Y.: On commutators of singular integrals and bilinear singular integrals. Trans. Amer. Math. Soc. \textbf{212}, 315–331 (1975).

\bibitem{2}Edwards, R.E., Gaudry, G.I.: Littlewood-Paley and multiplier theory. Springer-Verlag Berlin, Heidelberg, (2012).

\bibitem{11} Fan, D., Yang, D.: Herz-type Hardy spaces on Vilenkin groups and their applications. Sci. China Math. \textbf{43}(5), 481–494 (2000).

\bibitem{9}Fefferman, C., Stein, E.M.:  $H^{p}$ spaces of several variables. Acta Math. \textbf{129}, 137–193 (1972).



\bibitem{24} Grafakos, L., Torres, R.H.: Multilinear Calderón–Zygmund theory. Adv. Math. \textbf{165}(1), 124–164 (2002).

\bibitem{27} Guanghui, L. Shuangping, T.: Estimate for bilinear Calderón–Zygmund operators and their commutators on products of variable exponent spaces. Bull. Korean Math. Soc. \textbf{59}(6), 1471–1493 (2022).

\bibitem{34} He, S., Tao, S.: Bilinear 
$\theta$-type Calderón–Zygmund operators and their commutators on generalized weighted Morrey spaces over RD-spaces. Results Appl. Math. \textbf{26}, 100587 (2025).
\bibitem{Ho} Ho, K.P.: Calderón-Zygmund operators on Morrey and Hardy-Morrey spaces in locally compact Vilenkin groups. P-Adic Numbers Ultrametric Anal. Appl. \textbf{13}(3), 204-214 (2021).
\bibitem{23} Kenig, C.E., Stein, E.M.: Multilinear estimates and fractional integration. Math. Res. Lett. \textbf{6}(1), 1–15 (1999).

\bibitem{13}Kitada, T.: Multipliers for weighted Hardy spaces on locally compact Vilenkin groups. Bull. Aust. Math. Soc. \textbf{48}(3), 441–449 (1993).

\bibitem{7}Kitada, T.: Multipliers on weighted Hardy spaces over certain totally disconnected groups. Int. J. Math. Math. Sci. \textbf{11}(4), 665–674 (1988).

\bibitem{17} Kitada, T., Onneweer, C.W.: Weighted Triebel–Lizorkin spaces on locally compact Vilenkin groups. Math. Nachr. \textbf{168}(1), 191–208 (1994).

\bibitem{18}Kitada, T., Yang, D.: Potential operators in weighted Herz-type spaces over locally compact Vilenkin groups. Acta Math. Hung. \textbf{90}(1), 29–63 (2001).



\bibitem{25} Li, W., Xue, Q., Yabuta, K.: Multilinear Calderón–Zygmund operators on weighted Hardy spaces. Stud. Math. \textbf{199}, 1–16 (2010).

\bibitem{26} Lu, G.: Bilinear 
$\theta$-type Calderón–Zygmund operator and its commutator on non-homogeneous weighted Morrey spaces. Rev. Real Acad. Cienc. Exactas Fis. Nat. - A: Mat. \textbf{115}(1), 16 (2021).

\bibitem{35} Lu, G., Tao, S., Wang, M.: Bilinear $\theta$-type Calderón–Zygmund operators and their commutators on product generalized fractional mixed Morrey spaces. Math. Nachr. \textbf{297}(6), 1988–2005 (2024).

\bibitem{37} Lu, G., Wang, M.: Estimates for bilinear $\omega$-type Calderón–Zygmund operators and their commutator on the product of grand mixed (generalized) Morrey spaces. Anal. Math. \textbf{51}(3), 941–966 (2025).

\bibitem{14} Lu, S.Z., Yang, D.C.: The decomposition of Herz spaces on local fields and its applications. J. Math. Anal. Appl. \textbf{196}(1), 296–313 (1995).

\bibitem{MAL} Maldonado, D. and Naibo, V.: Weighted norm inequalities for paraproducts and bilinear pseudodifferential operators with mild regularity. J. Fourier Anal. Appl. \textbf{15}(2), 218-261 (2009).
\bibitem{3}Morrey, C.B.: On the solutions of quasi-linear elliptic partial differential equations. Trans. Amer. Math. Soc. \textbf{43}(1), 126-166 (1938).

\bibitem{6}Mo, H.X., Han, Z., Yang, L., Wang, X.J.: p-adic singular integrals and their commutators in generalized Morrey spaces. J. Inequal. Appl. \textbf{2019}(1), 65 (2019).

\bibitem{5}Mo, H., Wang, X., Ma, R.: Commutator of Riesz potential in 
p-adic generalized Morrey spaces. Front. Math. China \textbf{13}(3), 633–645 (2018).

\bibitem{4}Nakai, E.: Hardy–Littlewood maximal operator, singular integral operators and the Riesz potentials on generalized Morrey spaces. Math. Nachr. \textbf{166}(1), 95–103 (1994).

\bibitem{16} Onneweer, C.W., Quek, T.S.: Multipliers on weighted $L^p$-spaces
 over locally compact Vilenkin groups. Proc. Amer. Math. Soc. \textbf{105}(3), 622–631 (1989).

\bibitem{10}Pérez, C., Torres, R.H.: Sharp maximal function estimates for multilinear singular integrals. Contemp. Math. \textbf{320}, 323–332 (2003).

\bibitem{19} Quek, T.S., Yang, D.: Calderón–Zygmund operators on weighted weak Hardy spaces in locally compact Vilenkin groups. Math. Nachr. \textbf{225}(1), 123–143 (2001).

\bibitem{1} Taibleson, M.H.: Fourier analysis on local fields. Princeton University Press, Princeton (1975).
\bibitem{Wan} Wang, C.L.: Variants of the Hölder inequality and its inverses. Canad. Math. Bull., \textbf{20}(3), 377-384 (1977).
\bibitem{29} Wang, L., Shu, L.: Multilinear commutators of singular integral operators in variable exponent Herz-type spaces. Bull. Malays. Math. Sci. Soc. \textbf{42}(4), 1413–1432 (2019).

\bibitem{30} Wang, W., Xu, J.: Multilinear Calderón–Zygmund operators and their commutators with BMO functions in variable exponent Morrey spaces. Front. Math. China \textbf{12}(5), 1235–1246 (2017).

\bibitem{36} Wang, S., Xu, J.: Weighted norm inequality for bilinear Calderón–Zygmund operators on Herz–Morrey spaces with variable exponents. J. Inequal. Appl. \textbf{2019}(1), 251 (2019).

\bibitem{15} Yang, D.: Some remarks on singular integrals and power weights on Vilenkin groups. Acta Math. Hung. \textbf{81}(1), 69–88 (1998).
\end{thebibliography}
\end{document}